\documentclass[11pt]{article}
\usepackage{amsmath, amssymb, amscd, amsthm, amsfonts}
\usepackage{mathrsfs}
\usepackage{graphicx}
\usepackage[colorlinks, citecolor=blue]{hyperref}
\oddsidemargin 0pt
\evensidemargin 0pt
\marginparwidth 40pt
\marginparsep 10pt
\topmargin -20pt
\headsep 10pt
\textheight 8.7in
\textwidth 6.65in
\linespread{1.2}

\pagestyle{plain}

\newtheorem{theorem}{Theorem}[section]
\newtheorem{lemma}[theorem]{Lemma}
\newtheorem{definition}[theorem]{Definition}
\newtheorem{corollary}[theorem]{Corollary}
\newtheorem{proposition}[theorem]{Proposition}
\newtheorem{example}[theorem]{Example}

\newcommand{\al}{\alpha}

\bibliographystyle{plain}
\newcommand{\upcite}[1]{\textsuperscript{\cite{#1}}}

\begin{document}
\title{\bf  On Hom-Groups and Hom-Group actions}
\author{\normalsize \bf Liangyun Chen$^1$,  Tianqi Feng$^1$,  Yao Ma$^1$, Ripan Saha$^2$, Hongyi Zhang$^3$}

\date{\small{ $^1$School of Mathematics and Statistics, Northeast Normal University, Changchun,  130024,  CHINA
  \\
  $^2$ Department of Mathematics,
Raiganj University, Raiganj, 733134, West Bengal, INDIA
\\
  $^3$School of Mathematical Sciences, Nankai University,  Tianjin, 300071,  CHINA}}
\maketitle
\date{}

\begin{abstract}
A Hom-group is the non-associative generalization of a group, whose associativity and unitality are twisted by a compatible bijective map. In this paper, we give some new examples of Hom-groups, and show the first and the second isomorphism fundamental theorems of homomorphisms on Hom-groups. We also introduce the notion of Hom-group action, and as an application, we show the first Sylow theorem for Hom-groups along the line of group actions. \end{abstract}
\bigskip

\noindent{\textbf{Key words:}}  Hom-groups; Hom-subgroups; Hom-quotient groups; Isomorphism; Hom-group actions; First Sylow theorem.\\
\noindent{\textbf{MSC(2010):}}  17A99, 17B61, 20B99, 20D20, 20N05.
\renewcommand{\thefootnote}{\fnsymbol{footnote}}
\footnote[0]{Corresponding author(R. Saha): ripanjumaths@gmail.com}
\footnote[0]{Supported by  NNSF of China (Nos. 11771069, 11771410 and 11801066).}

\section{Introduction}\label{section-introduction}
Hartwig, Larsson, and Silvestrov \cite{HLS} firstly introduced the notion of Hom-Lie algebras during the study of deformations of the Witt and the Virasoro algebras. In a Hom-Lie algebra, the Jacobi identity is twisted by a linear map, called the Hom-Jacobi identity in {\rm \cite{AS, CKL, CZ}}. A quantum deformation or a $q$-deformation of an algebra of vector fields is obtained when a usual derivation was replaced by a derivation $d_{\sigma}$ that satisfies a twisted Leibniz rule $d_{\sigma}\left(fg\right)=d_{\sigma}f\left(g\right)+\sigma\left(f\right)d_{\sigma}\left(g\right)$, where $\sigma$ is an algebra endomorphism of a commutative associative algebra. The set of $\sigma$-derivations with the classical bracket is a new type of algebra that does not satisfy the Jacobi identity. However, it satisfies Hom-Jacobi identity and it is called Hom-Lie algebras. Because of close relation of discrete and deformed vector fields and differential calculus in {\rm \cite{HLS, LS1, LS2}}, more people pay special attention to the algebraic structure. The corresponding associative algebras called Hom-associative algebras, were introduced in {\rm \cite{MS1}}. A Hom-associative algebra with a bracket $\left[a,b\right]=ab-ba$ is a Hom-Lie algebra. Other-nonassociative objects such as Hom-coalgebras, Hom-bialgebras and Hom-Hopf algebras were introduced and studied in  {\rm \cite{GMMP, MS2, MS3, Ya2, Ya3, Ya4}}.

Much work has been done between Hom-Lie algebras and Hom-Hopf algebras. Due to the lack of Hom-type of notions for groups and group algebras, Hom-groups which naturally are appearing in the structure of the group-like elements of Hom-Hopf algebras came into the context of Hom-type objects to complete some relations {\rm \cite{LMT}}. Having this notion will help us to find out more about Hom-Hopf algebras. The twisting map $\al$ of a Hom-group $G$ does not need to be invertible in the original works {\rm \cite{MH, LMT}}, however many interesting results in {\rm\cite{Mohammad Hassanzadeh}}, including the main results of this paper, are obtained in the case $\al$ is invertible. The author in {\rm  \cite{Mohammad Hassanzadeh, MH}} has introduced some basics of Hom-groups, their representations, Hom-group (co)homology, and Lagrange's theorem for Hom-groups. In \cite{JMS}, the authors defined a notion of Hom-Lie groups and introduced an action of Hom-group on smooth manifolds.

In this paper, we assume that the twisting map $\al$ is bijective. Then we promote the results of groups to Hom-groups. In Section \ref{sec2}, we introduce some new examples of Hom-groups. In Section \ref{sec2.5}, we discuss some fundamental notions of Hom-groups such as Hom-subgroups and the law of cancellation of Hom-groups. In addition, we show the intersection of two Hom-subgroups is a Hom-group. In Section \ref{sec3}, we define Hom-normal subgroups by Hom-cosets. Then we give Hom-quotient groups and prove that a Hom-quotient group is a Hom-group. In Section \ref{sec4}, we use Hom-quotient groups  to show that the fundamental theorem of homomorphisms of Hom-groups is valid. Then we obtain the first and the second isomorphism theorems. In Section \ref{sec5}, we introduce a notion of Hom-group action and study some properties of Hom-group actions. In the final Section \ref{sec6}, we prove the first Sylow theorem for Hom-groups using Hom-group actions.

Throughout the paper, we assume the map $\al$ is invertible, then some properties can be obtained by Hom-associativity when $\al$ is invertible.

\section{New examples of Hom-groups} \label{sec2}
In this section, we recall the basic notions and properties of Hom-groups and provide some new examples of Hom-groups. 
\begin{definition}{\rm \upcite{Mohammad Hassanzadeh}}
	A Hom-group consists of a set $G$ together with a distinguished member $1\in G$, a bijective set map $\al:G\longrightarrow G$, a binary operation $\mu:G\times G\longrightarrow G$, where these pieces of structure are subject to the following axioms:
	
	{\rm (1)} The product map $\al:G\longrightarrow G$ satisfies the Hom-associativity property$$\mu\left( \al\left( g\right) ,\mu\left( h,k\right) \right)=\mu\left(\mu\left( g,h\right),\al\left( k\right)\right).$$
	For simplicity when there is no confusion, we omit the multiplication sign $\mu$.
	
	{\rm (2)}  The map $\al$ is multiplicative$$\al\left(gh\right)=\al\left( g\right) \al\left( h\right).$$
	
	{\rm (3)}  The element $1$ is called unit and it satisfies the Hom-unitarity conditions $$g1=1g=\al\left( g\right).$$
	
	{\rm (4)}  For every element $g\in G$, there exists an element $g^{-1}\in G$ such that $$gg^{-1}=g^{-1}g=1.$$
	
	$G$ is called a Hom-semigroup if the conditions {\rm (1)},{\rm (2)} are satisfied. A Hom-semigroup satisfying the condition {\rm (3)} is called a Hom-monoid.
	
\end{definition}

\begin{definition}
A Hom-group $(G, \alpha)$ is called an involutive Hom-group if $\alpha^2 = \alpha$.
\end{definition}

\begin{example}\label{group as a Hom-group}
Every group $(G,\mu)$ together with an automorphism $\alpha: G\to G$ has a Hom-group structure $(G, \mu_\alpha,\alpha)$, where $\mu_\alpha=\alpha\circ \mu$. Thus, every group can be thought of as a Hom-group by considering $\alpha$ as the identity map. In this way, the notion of a Hom-group is a generalized notion of a group.
\end{example}
\begin{example}
$(\mathbb{R},+,0,\text{Id})$ is a Hom-group. Note that this example is a special case of the previous example.
\end{example}
\begin{example}
Averaging operation $\oplus$ on $\mathbb{R}$, that is, $a\oplus b=\frac{a+b}{2}$ is a non-associative operation. Therefore, $(\mathbb{R},\oplus)$ is not a group. We show $(\mathbb{R},\oplus,0,\alpha)$ is a Hom-group, where $\alpha: \mathbb{R}\to \mathbb{R}$ is defined as $\alpha(a)=\frac{a}{2}$. It is trivial to check that $\alpha$ is a multiplicative bijective map on $\mathbb{R}$.
\begin{align*}
&\alpha(a)\oplus(b\oplus c)=\frac{a}{2}\oplus \frac{b+c}{2}=\frac{\frac{a}{2}+\frac{b+c}{2}}{2}=\frac{a+(b+c)}{4}\\
&(a\oplus b)\oplus \alpha(c)=\frac{a+b}{2}\oplus \frac{c}{2}=\frac{\frac{a+b}{2}+\frac{c}{2}}{2}=\frac{(a+b)+c}{4}.
\end{align*}
Thus, $\oplus$ is Hom-associative as usual addition $+$ of real numbers is associative.

Now,
\begin{align*}
a\oplus 0=\frac{a}{2}=\alpha(a).
\end{align*}
This implies $0$ is the Hom-identity. Hence, $(\mathbb{R},\oplus,0,\alpha)$ is a Hom-group.
\end{example}
\begin{example}
$a\otimes b=\sqrt[3]{ab}$ is a non-associative operation on $\mathbb{R}-\left\{0\right\}$. Therefore, $\left(\mathbb{R}-\left\{0\right\},1\right)$ is not a group. We show $\left(\mathbb{R}-\left\{0\right\},\otimes,1,\alpha\right)$ is a Hom-group, where $\alpha: \mathbb{R}-\left\{0\right\}\rightarrow\mathbb{R}-\left\{0\right\}$ is defined as $\alpha\left(a\right)=\sqrt[3]{ab}$. It is easy to check that $\alpha$ is a bijective map on $\mathbb{R}-\left\{0\right\}$.

We have
$$\alpha\left(a\otimes b\right)=\alpha\left(\sqrt[3]{ab}\right)=\sqrt[3]{\sqrt[3]{ab}}=\sqrt[9]{ab}$$
$$\alpha\left(a\right)\otimes\alpha\left(b\right)=\sqrt[3]{a}\otimes\sqrt[3]{b}=\sqrt[3]{\sqrt[3]{a}\sqrt[3]{b}}=\sqrt[9]{ab}$$
which imply $\alpha$ is multiplicative.

Moreover,
$$\alpha\left(a\right)\otimes \left(b\otimes c\right)=\sqrt[3]{a}\otimes\left(\sqrt[3]{bc}\right)=\sqrt[3]{\sqrt[3]{a}\sqrt[3]{bc}}=\sqrt[9]{abc}$$
$$\left(a\otimes b\right)\otimes \alpha\left(c\right) =\left(\sqrt[3]{ab}\right)\otimes\sqrt[3]{c}=\sqrt[3]{\sqrt[3]{ab}\sqrt[3]{c}}=\sqrt[9]{abc}$$
$$a\otimes1=\sqrt[3]{a1}=\sqrt[3]{a}=\alpha\left(a\right)=\sqrt[3]{1a}=1\otimes a$$
Therefore, $\otimes$ satisfies Hom-associative and Hom-identity. And it is easy to check that $\frac{1}{a}$ is the inverse of every $a \in \mathbb{R}-\left\{0\right\}$.
\end{example}
\begin{example}
The additive group of integers $(\mathbb{Z}, +)$ has a Hom-group structure with $\oplus$ and $\alpha : \mathbb{Z} \to \mathbb{Z}$ is defined as
$$a \oplus b = - (a+b),~ \alpha(a) = -a.$$
Note that this is an example of an involutive Hom-group.
\end{example}
\begin{example}
Let $X$ be a non-empty set and $\beta$ be a bijective map on $X$. Let $\bf{Sym(X)}$ be the set of all bijective maps on $X$. Then $(\bf{Sym(X)}, \mu, \alpha_{\beta})$ is a Hom-group, where
$$\mu(\sigma_1, \sigma_2)=\beta \circ \sigma_1\circ\beta^{-1}\circ\sigma_2\circ \beta^{-1},~~~\alpha_{\beta}(\sigma)=\beta \circ\sigma\circ \beta^{-1}.$$
It is easy to check that $(\bf{Sym(X)},\mu)$ satisfies Hom-associative condition. Note that $\beta$ is the Hom-identity of $\bf{Sym(X)}$, and the inverse of $\sigma_1$ under this binary operation is $\beta \circ \sigma^{-1}_1 \circ \beta$ . Thus, $(\bf{Sym(X)},\mu,\beta,\alpha_{\beta})$ is a Hom-group.
\end{example}
\begin{example}
This example is a particular case of the above example.
Consider $X=\lbrace 1,2,3\rbrace$, and $\beta = (1~2)$. We have
$$\bf{Sym(X)}=\lbrace id, (1~2), (1~3), (2~3), (1~2~3), (1~3~2)\rbrace$$
Using the binary operation $\mu$ on $\bf{Sym(X)}$, we get the following table:
\begin{equation*}
\begin{array}{|c|c|c|c|c|c|c|}
\hline
\ \mu  & id & (1~2) & (1~3)& (2~3)&(1~2~3)&(1~3~2)\\ \hline
id &(1~2) & id &(1~2~3) & (1~3~2) &(1~3)&(2~3)\\ \hline
(1~2) & id &(1~2)&(2~3) & (1~3) &(1~3~2) &(1~2~3)\\ \hline
(1~3) & (1~3~2) & (2~3)& (1~3) & (1~2) &(1~2~3)& id \\ \hline
(2~3) & (1~2~3)&(1~3) & (1~2) & (2~3) & id & (1~3~2) \\ \hline
(1~2~3)&(2~3)& (1~3~2)& id & (1~2~3)&(1~2)&(1~3) \\  \hline
(1~3~2)& (1~3)& (1~2~3)&(1~3~2)& id &(2~3)&(1~2)\\  \hline
\end{array}%
\end{equation*}%
Table for Hom-map $\alpha_\beta$ is given by:
\begin{equation*}
\begin{array}{|c|c|}
\hline
\ id  & id\\ \hline
(1~2)&(1~2)\\ \hline
(1~3) & (2~3)\\ \hline
(2~3) & (1~3)\\ \hline
(1~2~3)&(1~3~2)\\  \hline
(1~3~2)& (1~2~3)\\  \hline
\end{array}%
\end{equation*}%
\end{example}

\section{Fundamental properties of Hom-groups} \label{sec2.5}
In this section, we introduce several new notions of Hom-groups such as the law of cancellation and equivalent descriptions of Hom-groups. 
\begin{proposition}
	If $G$ is a Hom-monoid, then the unit is unique. Therefore, there is a unique unit in Hom-groups.	
\end{proposition}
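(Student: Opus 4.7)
The plan is to adapt the classical group-theoretic uniqueness-of-identity argument, but with the twist that two-sided multiplication by a unit yields $\alpha$ applied to the other factor rather than the factor itself. So instead of deriving $1 = 1'$ directly, I expect to derive $\alpha(1) = \alpha(1')$ and then invoke the standing assumption that $\alpha$ is bijective.

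Concretely, I would suppose $1$ and $1'$ are both elements of $G$ satisfying axiom (3) of the Hom-monoid definition, and evaluate the product $1 \cdot 1'$ in two different ways. Using that $1'$ is a unit (applied to $g = 1$), the axiom $g \cdot 1' = \alpha(g)$ yields $1 \cdot 1' = \alpha(1)$. Using that $1$ is a unit (applied to $g = 1'$), the axiom $1 \cdot g = \alpha(g)$ yields $1 \cdot 1' = \alpha(1')$. Comparing these two expressions gives $\alpha(1) = \alpha(1')$, and injectivity of $\alpha$ (which holds since $\alpha$ is assumed bijective throughout the paper) immediately produces $1 = 1'$.

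The main point worth flagging, rather than a genuine obstacle, is precisely the role of the bijectivity hypothesis: without it, the computation above would only guarantee that two candidate units agree after applying $\alpha$, not that they are literally equal. This is why the paper's blanket assumption that $\alpha$ is invertible is essential even at this earliest stage. No use of Hom-associativity or of inverses is needed, so the same proof establishes uniqueness in Hom-monoids and, as a special case, in Hom-groups, matching the wording of the proposition.
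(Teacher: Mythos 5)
Your proof is correct and follows essentially the same route as the paper: multiply the two candidate units together and evaluate the product two ways via the Hom-unit axiom, obtaining $\alpha(1)=\alpha(1')$ and hence $1=1'$. If anything, your explicit appeal to the injectivity of $\alpha$ is cleaner than the paper's chain $1'=\alpha(1')=1'1=\alpha(1)=1$, whose outer equalities tacitly assume $\alpha$ fixes the units rather than invoking bijectivity.
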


\begin{proof}
	Let $1'$ be another unit of $G$, then $1'=\al\left( 1'\right)=1'1=\al\left( 1\right)=1.$
\end{proof}

\begin{proposition}
	Let $G$ be a Hom-group. Then
	
	{\rm (1)}{\rm \upcite{JMS}} $\alpha(1)=1$.
	
	{\rm (1)}{\rm \upcite{Mohammad Hassanzadeh}} For each $g\in G$, the inverse element $g^{-1}$ is unique.
	
	{\rm (2)}{\rm \upcite{MH}} For each $g\in G$, $\left( g^{-1}\right) ^{-1}=g$.
	
	{\rm (3)}{\rm \upcite{Mohammad Hassanzadeh}} For $g,h\in G$, $\left( gh\right)^{-1}=h^{-1}g^{-1}$.
	
	{\rm (4)}{\rm \upcite{MH}} For each $g\in G$, $\left(\al(g)\right)^{-1}=\al(g^{-1})$.
\end{proposition}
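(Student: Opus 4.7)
These five claims form a natural dependency chain: item (1) feeds into (4); the uniqueness of inverses (which I shall label (1$'$) to disambiguate from the first (1)) supplies the cancellation principle used for (2), (4) and (3). My plan is therefore to prove them in the order (1), (1$'$), (2), (4), (3), using bijectivity of $\alpha$ at each step where a group-theoretic cancellation would otherwise be made.

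For (1), I would first derive the auxiliary fact $\alpha(1^{-1})=1$ from the two identities $1^{-1}\cdot 1=1$ (inverse axiom) and $1^{-1}\cdot 1=\alpha(1^{-1})$ (unitarity with $g=1^{-1}$). Then applying the multiplicative map $\alpha$ to $1\cdot 1^{-1}=1$ gives $\alpha(1)\cdot\alpha(1^{-1})=\alpha(1)$, which simplifies to $\alpha(1)\cdot 1=\alpha(1)$, and the left side equals $\alpha^{2}(1)$ by unitarity. Bijectivity of $\alpha$ then forces $\alpha(1)=1$. For (1$'$), assume $gh=hg=1$ for some $h$; a single use of Hom-associativity on $\alpha(h)\cdot(g\,g^{-1})=(hg)\cdot\alpha(g^{-1})$ collapses both sides under the inverse and unitarity axioms to $\alpha^{2}(h)=\alpha^{2}(g^{-1})$, and bijectivity again gives $h=g^{-1}$. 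Statement (2) is then immediate because $g^{-1}g=gg^{-1}=1$ displays $g$ as an inverse of $g^{-1}$, and (4) follows by applying $\alpha$ to $gg^{-1}=g^{-1}g=1$, invoking multiplicativity and (1) to obtain $\alpha(g)\alpha(g^{-1})=\alpha(g^{-1})\alpha(g)=\alpha(1)=1$, and closing with (1$'$).

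The genuine obstacle is (3), because the textbook manipulation $g(hh^{-1})g^{-1}$ requires strict associativity, which Hom-groups do not possess. My plan is to combine \emph{two} applications of Hom-associativity with the fact that $\alpha^{-1}$ is itself a multiplicative bijection (which uses invertibility of $\alpha$) and with the corollary $\alpha^{-1}(1)=1$ of (1). Concretely, write $h^{-1}g^{-1}=\alpha(c)$ with $c=\alpha^{-1}(h^{-1})\,\alpha^{-1}(g^{-1})$, so that Hom-associativity gives $(gh)\cdot\alpha(c)=\alpha(g)\cdot(hc)$. A second application of Hom-associativity, this time to $h\cdot\bigl(\alpha^{-1}(h^{-1})\alpha^{-1}(g^{-1})\bigr)$, moves the parenthesization so that the inner product $\alpha^{-1}(h)\cdot\alpha^{-1}(h^{-1})=\alpha^{-1}(hh^{-1})=\alpha^{-1}(1)=1$ appears, reducing $hc$ to $1\cdot g^{-1}=\alpha(g^{-1})$. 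The outer expression then collapses to $\alpha(g)\cdot\alpha(g^{-1})=\alpha(gg^{-1})=\alpha(1)=1$; the mirror computation yields $(h^{-1}g^{-1})(gh)=1$, and the uniqueness part (1$'$) finishes the proof.
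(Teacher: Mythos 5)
Your proof is correct, but note that the paper itself offers no argument for this proposition: all five items are quoted from the references [JMS], [Hassanzadeh, Lagrange's Theorem], and [Hassanzadeh, Hom-Group Representations], so there is no in-paper proof to compare against. Your chain (1) $\to$ (1$'$) $\to$ (2), (4), (3) checks out step by step: the auxiliary identity $\alpha(1^{-1})=1$, the single Hom-associativity move $\alpha(h)(gg^{-1})=(hg)\alpha(g^{-1})$ giving uniqueness, and in (3) the substitution $h^{-1}g^{-1}=\alpha(c)$ with $c=\alpha^{-1}(h^{-1})\alpha^{-1}(g^{-1})$ followed by a second Hom-associativity application that exposes $\alpha^{-1}(h)\alpha^{-1}(h^{-1})=\alpha^{-1}(1)=1$, are all valid, and the mirror computation for the left inverse goes through the same way. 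Two small remarks. First, your argument leans on bijectivity of $\alpha$ (injectivity in (1) and (1$'$), and $\alpha^{-1}$ throughout (3)); that is fine here because the paper assumes $\alpha$ invertible throughout, but the cited sources prove some of these items without that hypothesis, so your proof is slightly less general than what the citations assert. Second, you invoke multiplicativity of $\alpha^{-1}$, which the paper only establishes as a separate proposition \emph{after} this one; since it follows in one line from multiplicativity and injectivity of $\alpha$ (as you indicate), this is a harmless forward reference, but if you wanted your write-up to slot into the paper's order you should either prove that identity inline or reorder. Stylistically, your manipulations (inserting $\alpha(\alpha^{-1}(\cdot))$ and then applying Hom-associativity) are exactly the techniques the paper uses in its later proofs, e.g.\ in the equivalent characterization of Hom-groups and in the coset computations, so your argument is very much in the spirit of the paper even though the paper chose to cite rather than prove.
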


\begin{proposition}
      Let $G$ be a Hom-group. If there is an element $g\in G$ such that $gg=\al\left(g\right)=g1$, then $g=1$.
\end{proposition}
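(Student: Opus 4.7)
The plan is to exploit Hom-associativity to collapse the hypothesis $gg = \alpha(g)$ down to the equation $\alpha^{2}(g) = 1$, and then use the facts $\alpha(1)=1$ and $\alpha$ bijective (both available from the earlier propositions) to conclude $g = 1$. The clause $\alpha(g) = g1$ in the hypothesis is automatic from the Hom-unitarity axiom, so there is really only one equation to work with, namely $gg = \alpha(g)$.

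The key move is to multiply the identity $gg=\alpha(g)$ on the right by $\alpha(g^{-1})$, which is exactly the shape needed to apply Hom-associativity in the form $\alpha(a)(bc) = (ab)\alpha(c)$ with $(a,b,c) = (g,g,g^{-1})$. On the one hand, the right-hand side becomes $\alpha(g)\alpha(g^{-1}) = \alpha(gg^{-1}) = \alpha(1) = 1$, using multiplicativity of $\alpha$ and property~(1) of the previous proposition. On the other hand, Hom-associativity rewrites $(gg)\alpha(g^{-1})$ as $\alpha(g)(gg^{-1}) = \alpha(g)\cdot 1 = \alpha(\alpha(g)) = \alpha^{2}(g)$, again using Hom-unitarity in the last step. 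Comparing the two evaluations yields $\alpha^{2}(g) = 1 = \alpha^{2}(1)$, and since $\alpha$ is a bijection this forces $g = 1$.

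I do not anticipate any real obstacle here; the only subtlety is choosing the correct right-multiplier so that Hom-associativity applies without introducing auxiliary $\alpha$'s that cannot be unwound. Multiplying by $g^{-1}$ itself would give $(gg)g^{-1}$, which is not directly in the Hom-associative pattern, whereas $\alpha(g^{-1})$ fits perfectly and also pairs cleanly with $\alpha(g)$ on the other side via multiplicativity. Once this choice is made, the rest is a short chain of rewrites using only the axioms and the already-established identities $\alpha(1)=1$ and $(\alpha(g))^{-1}=\alpha(g^{-1})$.
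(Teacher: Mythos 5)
Your proof is correct and is essentially the paper's own argument: multiply $gg=\alpha(g)$ on the right by $\alpha(g^{-1})$, evaluate one way via multiplicativity to get $1$ and the other way via Hom-associativity to get $\alpha^{2}(g)$, then conclude $g=1$ from bijectivity of $\alpha$ together with $\alpha(1)=1$. No gaps.
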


\begin{proof}
	Assume that $g\in G$ satisfies $gg=\al\left(g\right)=g1$. Then
	\begin{eqnarray*}
		\left( gg\right) \al\left(g^{-1}\right) =\al\left( g\right) \al\left( g^{-1}\right) =\al\left( gg^{-1}\right) =\al\left( 1\right) =1,\\
		\left( gg\right) \al\left(g^{-1}\right) =\al\left( g\right) \left( gg^{-1}\right) =\al\left( g\right) 1=\al^{2}\left( g\right),
	\end{eqnarray*}
	which imply that $\al^{2}\left( g\right)=1$. Hence $g=\al^{-2}\left( 1\right) =1$ by $\al$ being invertible.
\end{proof}

\begin{proposition}
	If the elements $g,h,k$ in a Hom-group satisfy $gh=gk$ or $hg=kg$, then $h=k$.
\end{proposition}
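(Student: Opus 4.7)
The plan is to mimic the classical cancellation argument from ordinary group theory, but to route it through Hom-associativity together with the fact (already available from the earlier propositions and the standing assumption) that $\alpha$ is bijective. The idea is that although we cannot simply ``multiply by $g^{-1}$ and collapse $g^{-1}g$ to $1$'' inside a compound product, we can always shuffle the factor $g^{-1}$ across a pair of parentheses using the identity $\alpha(a)(bc)=(ab)\alpha(c)$, at the price of applying $\alpha$ once.

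For the left cancellation, assume $gh=gk$ and compute $\alpha(g^{-1})(gh)$ in two ways. By Hom-associativity,
\begin{equation*}
\alpha(g^{-1})(gh)=(g^{-1}g)\alpha(h)=1\cdot\alpha(h)=\alpha(\alpha(h))=\alpha^{2}(h),
\end{equation*}
and in the same way $\alpha(g^{-1})(gk)=\alpha^{2}(k)$. Since the hypothesis gives $\alpha(g^{-1})(gh)=\alpha(g^{-1})(gk)$, we obtain $\alpha^{2}(h)=\alpha^{2}(k)$, and then $h=k$ by applying $\alpha^{-2}$, which exists because $\alpha$ is bijective.

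For the right cancellation, assume $hg=kg$ and compute $(hg)\alpha(g^{-1})$ symmetrically: Hom-associativity gives
\begin{equation*}
(hg)\alpha(g^{-1})=\alpha(h)(gg^{-1})=\alpha(h)\cdot 1=\alpha^{2}(h),
\end{equation*}
and likewise $(kg)\alpha(g^{-1})=\alpha^{2}(k)$. Hence $\alpha^{2}(h)=\alpha^{2}(k)$ and again $h=k$.

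The only thing one has to be careful with is the bookkeeping: in the non-associative setting one cannot rearrange products freely, so every step must be an explicit instance of Hom-associativity or Hom-unitarity. The place where invertibility of $\alpha$ genuinely matters is the very last line, where we cancel the double $\alpha$ produced by the reassociation. I do not expect any real obstacle beyond keeping these rewrites in the correct order.
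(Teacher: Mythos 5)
Your proposal is correct and follows essentially the same route as the paper: multiply the hypothesis by $\alpha(g^{-1})$, reassociate via the Hom-associativity identity $\alpha(a)(bc)=(ab)\alpha(c)$, collapse $g^{-1}g$ (resp. $gg^{-1}$) to $1$, use Hom-unitarity to obtain $\alpha^{2}(h)=\alpha^{2}(k)$, and cancel the bijection $\alpha$. The only difference is cosmetic: you write out the right-cancellation computation explicitly, whereas the paper only remarks that the same argument applies.
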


\begin{proof}
		Assume that $gh=gk$. By left multiplying $\al\left(g^{-1}\right)$ to both sides of $gh=gk$ the equality $\al\left(g^{-1}\right)\left(gh\right)=\al\left(g^{-1}\right)\left(gk\right)$ is obtained, so $\left(g^{-1}g\right)\al\left(h\right)=\left(g^{-1}g\right)\al\left(k\right)$ by the law of Hom-associativity. Hence $1\al\left(h\right)=1\al\left(k\right)$	by the definition of Hom-invertibility. By Hom-unitarity we have $\al^{2}\left(h\right)=\al^{2}\left(k\right)$. Then $h=k$, since $\al$ is invertible. The same argument shows that $hg=kh$ implies $h=k$.
\end{proof}

\begin{proposition}
	Let $G$ be a Hom-group. Then
	
	{\rm (1)} For all $g,h\in G$, $\al^{-1}\left(gh\right)=\al^{-1}\left(g\right)\al^{-1}\left(h\right)$.
	
	{\rm (2)} For all $g,h,k\in G$, $\left(\al^{-1}\left(g\right)h\right)k=g\left(h\al^{-1}\left(k\right)\right)$.
\end{proposition}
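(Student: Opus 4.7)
My plan is to treat the two parts as direct consequences of the axioms, exploiting the assumption that $\alpha$ is invertible.

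For part (1), the idea is to use multiplicativity in reverse. Since $\alpha$ is a multiplicative bijection, I would compute
\[
\alpha\bigl(\alpha^{-1}(g)\,\alpha^{-1}(h)\bigr) = \alpha(\alpha^{-1}(g))\,\alpha(\alpha^{-1}(h)) = gh,
\]
and then apply $\alpha^{-1}$ to both sides to conclude $\alpha^{-1}(g)\alpha^{-1}(h) = \alpha^{-1}(gh)$. This is essentially the observation that the inverse of a multiplicative bijection is multiplicative.

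For part (2), the plan is to substitute into the Hom-associativity axiom $\alpha(a)(bc) = (ab)\alpha(c)$. Setting $a = \alpha^{-1}(g)$, $b = h$, and $c = \alpha^{-1}(k)$, we get $\alpha(a) = g$ and $\alpha(c) = k$, so the axiom becomes exactly $g(h\,\alpha^{-1}(k)) = (\alpha^{-1}(g)h)k$, which is the desired identity.

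There is no real obstacle here; both statements fall out mechanically once we use that $\alpha$ is bijective (so every element is of the form $\alpha(\,\cdot\,)$) together with the two axioms of multiplicativity and Hom-associativity. The only thing to emphasize in the write-up is that invertibility of $\alpha$ is what allows the substitutions $\alpha^{-1}(g), \alpha^{-1}(h), \alpha^{-1}(k)$ to make sense as elements of $G$.
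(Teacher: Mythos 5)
Your proposal is correct. Part (1) is exactly the paper's argument: apply $\alpha$ to $\alpha^{-1}(g)\alpha^{-1}(h)$, use multiplicativity to recognize $gh$, and cancel by injectivity of $\alpha$. For part (2) you take a genuinely shorter route than the paper. The paper starts from Hom-associativity written for the triple $(g, h', k)$ where $h'=\alpha(h)$, then applies $\alpha^{-1}$ to both sides of $\alpha(g)(h'k)=(gh')\alpha(k)$ and invokes part (1) twice to distribute $\alpha^{-1}$ over the products, finally rewriting $\alpha^{-1}(h')$ as $h$. You instead substitute the preimages directly: with $a=\alpha^{-1}(g)$, $b=h$, $c=\alpha^{-1}(k)$, the axiom $\alpha(a)(bc)=(ab)\alpha(c)$ collapses immediately to $g\left(h\alpha^{-1}(k)\right)=\left(\alpha^{-1}(g)h\right)k$, with no need for part (1) at all. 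Both arguments rest on the same two ingredients (Hom-associativity plus bijectivity of $\alpha$, which guarantees the preimages exist in $G$), but your substitution is a one-line derivation and makes part (2) logically independent of part (1), whereas the paper's version illustrates the useful technique of pushing $\alpha^{-1}$ through products, which it reuses elsewhere.
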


\begin{proof}
	{\rm (1)} For all $g,h\in G$,
	\begin{align*}
	\al\left(\al^{-1}\left(gh\right)\right)=gh=\al\left(\al^{-1}\left(g\right) \right)\al\left(\al^{-1}\left (h\right)\right)=\al\left(\al^{-1}\left( g\right)\al^{-1}\left (h\right)\right).
	\end{align*}
	We have  $\al^{-1}\left(gh\right)=\al^{-1}\left(g\right)\al^{-1}\left(h\right)$, since $\al$ is invertible.

    {\rm (2)} For all $g,h,k\in G$, since $\al$ is invertible, there exists $h'$ such that $\al^{-1}\left(h'\right)=h$, and
    \begin{gather*}
    \al\left(g\right)\left(h'k\right)=\left(gh'\right)\al\left(k\right),\\
    \al^{-1}\left(\al\left(g\right)\left(h'k\right)\right)=\al^{-1}\left(\left(gh'\right)\al\left(k\right)\right),\\
    \al^{-1}\left(\al\left(g\right)\right)\al^{-1}\left(h'k\right)=\al^{-1}\left(gh'\right)\al^{-1}\left(\al\left(k\right)\right),\\
    g\al^{-1}\left(h'k\right)=\al^{-1}\left(gh'\right)k,\\
    g\left(\al^{-1}\left(h'\right)\al^{-1}\left(k\right)\right)=\left(\al^{-1}\left(g\right)\al^{-1}\left(h'\right)\right)k,
    g\left(h\al^{-1}\left(k\right)\right)=\left(\al^{-1}\left(g\right)h\right)k,
    \end{gather*}
    which complete the proof.
\end{proof}

\begin{theorem}
	Let $G$ be a Hom-semigroup. Then $G$ is a Hom-group if and only if the following two conditions hold:
	
	{\rm (1)} There exists an element $1\in G$ such that $1g=\al\left(g\right)$, $\al\left(1\right)=1$ (left unit).
	
	{\rm (2)} For each $g\in G$, there exists an element $g^{-1}\in G$ such that $g^{-1}g=1$ (left inverse).
\end{theorem}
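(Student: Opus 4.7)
The plan. The forward direction is immediate from the Hom-group axioms together with $\alpha(1) = 1$, which was established in the preceding proposition. For the converse, assuming only the left-unit and left-inverse axioms, I would proceed in two stages: first promote each left inverse to a two-sided inverse, then promote the left unit to a two-sided unit.

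Stage 1. For $g \in G$, let $g^{-1}$ denote a chosen left inverse and set $h := gg^{-1}$; the goal is $h = 1$. The crux is the identity $h \cdot h = \alpha(h)$. To produce it, begin with $(gg^{-1})(gg^{-1})$ and rewrite the right-hand factor as $\alpha(\alpha^{-1}(g)\alpha^{-1}(g^{-1}))$ using multiplicativity of $\alpha^{-1}$ (part~(1) of the preceding proposition). One application of Hom-associativity then yields $\alpha(g) \cdot \bigl(g^{-1}(\alpha^{-1}(g)\alpha^{-1}(g^{-1}))\bigr)$, and a second application, together with $\alpha^{-1}(g^{-1})\alpha^{-1}(g) = \alpha^{-1}(g^{-1}g) = \alpha^{-1}(1) = 1$ and the left-unit axiom, reduces the inner bracket to $\alpha(g^{-1})$. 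This gives $hh = \alpha(g)\alpha(g^{-1}) = \alpha(h)$. Now choose a left inverse $h'$ of $h$ and compute $\alpha(h')(hh)$ two different ways: Hom-associativity yields $(h'h)\alpha(h) = 1 \cdot \alpha(h) = \alpha^2(h)$, whereas multiplicativity combined with $hh = \alpha(h)$ yields $\alpha(h')\alpha(h) = \alpha(h'h) = \alpha(1) = 1$. Equating the two forces $\alpha^2(h) = 1$, and invertibility of $\alpha$ (together with $\alpha(1)=1$) gives $h = 1$.

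Stage 2. With $gg^{-1} = g^{-1}g = 1$ now available, Hom-associativity applied to the triple $(g, g^{-1}, g)$ gives $\alpha(g)(g^{-1}g) = (gg^{-1})\alpha(g)$, which collapses to $\alpha(g) \cdot 1 = 1 \cdot \alpha(g) = \alpha^2(g)$. Since $\alpha$ is surjective, every $x \in G$ has the form $\alpha(g)$, so $x \cdot 1 = \alpha(x)$ for all $x$. The main obstacle is the opening identity $hh = \alpha(h)$ of Stage 1: beyond it, the argument is a clean two-computation cancellation mirroring the classical group-theoretic proof, but producing $hh = \alpha(h)$ requires careful bookkeeping of $\alpha^{\pm 1}$ through nested products, and a sloppy application of Hom-associativity will derail the calculation.
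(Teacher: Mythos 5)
Your proposal is correct and follows essentially the same route as the paper: both hinge on the computation $h h=\alpha(h)$ for $h=gg^{-1}$ (via rewriting the second factor as $\alpha\left(\alpha^{-1}(g)\alpha^{-1}(g^{-1})\right)$ and using Hom-associativity with $\alpha^{-1}(g^{-1}g)=1$), then deduce $h=1$ by a two-way computation against a left inverse of $h$, and finally obtain the right unit. Your minor variations --- computing $\alpha(h')(hh)$ instead of the paper's $h^{-1}\left(\alpha^{-1}(h)\alpha^{-1}(h)\right)$, and getting $g1=\alpha^2\!\left(\alpha^{-1}(g)\right)$ from Hom-associativity on $(g,g^{-1},g)$ plus surjectivity of $\alpha$ rather than the paper's direct expansion of $g1=g\alpha^{-1}(1)$ --- are cosmetic.
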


\begin{proof}
	$\left(\Longrightarrow\right)$ Trivial.
	
	$\left(\Longleftarrow\right)$
	We need to prove: $1$ is the right unit, and $g^{-1}$ is the right inverse, i.e. $gg^{-1}=1$.
	Let $h=gg^{-1}$, then
\begin{align*}
	hh&=\left(gg^{-1}\right) \left(gg^{-1}\right)\\
	&=\left(gg^{-1}\right)\al\left(\al^{-1}\left(gg^{-1}\right)\right)\\
	&=\al\left(g\right)\left(g^{-1}\left(\al^{-1}\left(gg^{-1}\right)\right)\right)\\
	&=\al\left(g\right)\left(g^{-1}\left(\al^{-1}\left(g\right)\al^{-1}\left(g^{-1}\right)\right)\right)\\
	&=\al\left(g\right)\left(\al^{-1}\left(g^{-1}g\right)g^{-1}\right)\\
	&=\al\left(g\right)\left(\al^{-1}\left(1\right)g^{-1}\right)\\
	&=\al\left(g\right)\left(1g^{-1}\right)\\
	&=\al\left(g\right)\al\left(g^{-1}\right)\\
	&=\al\left(gg^{-1}\right)\\
	&=\al\left(h\right).
\end{align*}

Moreover, we have $h=\al^{-1}(hh)=\al^{-1}(h)\al^{-1}(h)$ and $\al\left(1\right)=1=h^{-1}h=h^{-1}\left(\al^{-1}(h)\al^{-1}(h)\right)=\left(\al^{-1}(h)\al^{-1}(h)\right)h=\al^{-1}\left(1\right)h=1h=\al\left(h\right)=\al\left(gg^{-1}\right)$.
$1=gg^{-1}$ since $\al$ is invertible. Thus $g^{-1}$ is the right inverse.

In addition, $g1=g\al^{-1}\left(1\right)=g\al\left(g^{-1}g\right)=g\left(\al^{-1}(g^{-1})\al^{-1}(g)\right)=\left(\al^{-1}(g)\al^{-1}(g^{-1})\right)g=\al^{-1}\left(gg^{-1}\right)g$ $=\al^{-1}\left(1\right)g=1g=\al\left(g\right)$. Thus $1$ is the right unit.

This shows that $G$ is a Hom-group.
\end{proof}

\begin{proposition}
	Let $G$ be a nonempty Hom-semigroup with $|G|<+\infty$. Then $G$ is a Hom-group if and only if $G$ satisfies the law of left and right cancellation.
\end{proposition}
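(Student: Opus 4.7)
The plan is to reduce the backward implication to the previous theorem, which says that having a left unit $1$ with $\alpha(1)=1$ and $1g=\alpha(g)$ together with a left inverse for every element already forces $G$ to be a Hom-group. The forward direction is immediate from the cancellation proposition proved earlier.

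For the forward direction, I would simply note that every Hom-group satisfies left and right cancellation by the preceding proposition.

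For the backward direction, the first step is to exploit finiteness: for any $a\in G$, the map $L_a:G\to G$ given by $L_a(x)=ax$ is injective by left cancellation, hence bijective since $G$ is finite; similarly $R_a(x)=xa$ is bijective by right cancellation. Fix any $g_0\in G$. Because $\alpha(g_0)\in G$ lies in the image of $L_{g_0}$, there exists $e\in G$ with $g_0 e=\alpha(g_0)$. The key step, where Hom-associativity and multiplicativity of $\alpha$ come in, is to bootstrap this single equation into $eh=\alpha(h)$ for every $h\in G$. Indeed, for arbitrary $h$,
\begin{equation*}
\alpha(g_0)(eh)=(g_0 e)\alpha(h)=\alpha(g_0)\alpha(h)=\alpha(g_0 h),
\end{equation*}
so applying left cancellation by $\alpha(g_0)$ gives $eh=\alpha(h)$. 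To check $\alpha(e)=e$, multiplicativity and the identity just established yield $\alpha(e)\alpha(h)=\alpha(eh)=\alpha^2(h)=e\,\alpha(h)$, and right cancellation by $\alpha(h)$ finishes it. Thus $e$ is a left unit in the sense required by the previous theorem.

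Finally, for any $g\in G$, bijectivity of $R_g$ furnishes an element $g^{-1}\in G$ with $g^{-1}g=e$, i.e.\ a left inverse with respect to $e$. With left unit and left inverse both in hand, the earlier theorem immediately promotes $G$ to a Hom-group. The main obstacle I anticipate is the bootstrapping step, since one must carefully juggle Hom-associativity with the fact that cancellation is available for arbitrary elements (including $\alpha(g_0)$ and $\alpha(h)$), rather than being tempted to cancel by $g_0$ directly, which would not produce the right-hand side needed.
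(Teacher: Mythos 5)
Your proof is correct and follows essentially the same route as the paper: both exploit finiteness plus the cancellation laws to make the translation maps surjective, extract a left unit $e$ satisfying $eg=\alpha(g)$ and $\alpha(e)=e$ together with left inverses, and then invoke the preceding theorem that a left unit and left inverses already force a Hom-semigroup to be a Hom-group. The only cosmetic differences are that the paper obtains its unit from a right-translation equation $a_ra_k=\alpha(a_k)$ and proves $\alpha(a_r)=a_r$ by contradiction, whereas you start from $g_0e=\alpha(g_0)$ and argue directly.
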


\begin{proof}
	Let $G=\left\{ a_1,a_2,\cdots,a_n \right\}$. Since $\al$ is invertible, we have
	$$G=\left\{a_1,a_1,\cdots,a_n\right\}
	=\left\{\al\left(a_{1}\right),\al\left(a_{2}\right),\cdots,\al\left(a_{n}\right)\right\}.$$
    For any $k\in \left\{1,2,\cdots,n\right\}$, by right multiplying $a_k$ to every element of $G$,  $G'=\left\{ a_1a_k,a_2a_k,\cdots,a_na_k \right\}$ is obtained.

    We claim that $G=G'$. By the closure of binary operation, we have $G'\subseteq G$. We need to show that $|G'|=n=|G|$, otherwise there exist $1\le i,j\le n$ such that $a_ia_k=a_ja_k$. $a_i=a_j$ by the law of right cancellation, which leads to a contraction. Thus $|G'|=n=|G|$. The claim is proved.

    There exists $r\in \left\{1,2,\cdots,n\right\}$ such that $\al\left(a_k\right)=a_ra_k$ by $G=G'$. So $\al\left(a_k\right)\al\left(a_k\right)=\al\left(a_ka_k\right)=\al\left(a_k\right)\al\left(a_k\right)=\al\left(a_k\right)a_ra_k=\left(a_ka_r\right)\al\left(a_k\right)$. By the law of right cancelation, we get $\al\left(a_k\right)=a_ka_r$. We claim that $a_r$ is the left unit of $G$. For any $\al\left(a_i\right)\in G$, $\al\left(a_k\right)\al\left(a_i\right)=\left(a_ka_r\right)\al\left(a_i\right)=\al\left(a_k\right)\left(a_ra_i\right)$. By the law of left cancellation, we get $\al\left(a_i\right)=a_ra_i$. In addition, $\al\left(a_r\right)=a_r$, otherwise assume that $\al\left(a_r\right)=a_s\ne a_r$, we have $a_r\left(a_ra_s\right)=\al\left(a_ra_s\right)=\al\left(a_r\right)\al\left(a_s\right)=a_s\left(a_ra_s\right)$. By the law of right cancelation, we get $a_r=a_s$, leading to contradiction. Therefore $a_r$ is the left unit of $G$. This proves the claim.

    Since $G'=G$, there exists $j\in \left\{ 1,2,\cdots,n \right\}$ such that $a_r=a_ja_k$, that is, $a_j$ is the left inverse of $a_k$ with respect to the left unit $a_r$ in $G$. Therefore, $G$ is a Hom-group.
\end{proof}

\begin{definition}{\rm \upcite{MH}}
	Let $\left(G,\al\right)$ be a Hom-group and $H$ a nonempty subset that is closed under the product in $G$. If $\left(H,\al\right)$ is itself a Hom-group under the product in $G$, then $H$ is said to be a Hom-subgroup of $G$, denoted by $H\preceq G$.
\end{definition}

\begin{proposition}\label{eq1}
	Let $H$ be a Hom-subgroup of $G$. Then
	
	{\rm (1)} $1_H=1_G$.
	
	{\rm (2)} For each $g\in H$, $g^{-1}_H=g^{-1}_G$.
\end{proposition}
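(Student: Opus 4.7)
The plan is to exploit two earlier results: the proposition asserting that if $g \in G$ satisfies $gg = \alpha(g) = g1$ then $g = 1$, and the uniqueness of inverses in a Hom-group. Both parts follow cleanly once we make sure we can read equations written inside $H$ as equations inside $G$, which is permissible because the binary operation and the map $\alpha$ on $H$ are the restrictions of those on $G$.

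For (1), the strategy is to apply the Hom-unit axiom for $H$ to the element $1_H$ itself and then compare with the Hom-unit axiom of $G$ evaluated at $1_H$. In $H$ we have $1_H \cdot 1_H = \alpha(1_H)$, and in $G$ we also have $1_H \cdot 1_G = \alpha(1_H)$. Thus $1_H \cdot 1_H = \alpha(1_H) = 1_H \cdot 1_G$, so the hypothesis of the earlier proposition ``$gg = \alpha(g) = g1$'' is met by $g = 1_H$ inside $G$, forcing $1_H = 1_G$. (Equivalently, one may invoke the law of left cancellation directly on $1_H \cdot 1_H = 1_H \cdot 1_G$.)

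For (2), once (1) is established the argument is immediate. Fix $g \in H$ and let $g^{-1}_H$ denote its inverse in the Hom-group $H$. Then in $G$ we have $g \cdot g^{-1}_H = 1_H = 1_G = g \cdot g^{-1}_G$, so the law of left cancellation in $G$ yields $g^{-1}_H = g^{-1}_G$. Alternatively, this is a direct application of the uniqueness of inverses in the Hom-group $G$, which was recorded earlier.

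There is no real obstacle here; the only subtlety to watch is the proper bookkeeping of which unit/inverse is being used in which Hom-group, so that we can legitimately transfer the equation from $H$ to $G$ before applying cancellation or the ``$gg = \alpha(g) = g1 \Rightarrow g = 1$'' lemma.
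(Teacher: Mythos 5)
Your proof is correct and follows essentially the same route as the paper: both parts reduce to the cancellation law in $G$ after writing the Hom-unit identity for $1_H$ (the paper cancels on the right from $1_H 1_H=\alpha(1_H)=1_G 1_H$, you cancel on the left from $1_H 1_H=\alpha(1_H)=1_H 1_G$, and your alternative appeal to the lemma ``$gg=\alpha(g)=g1\Rightarrow g=1$'' is equally valid). Part (2) is argued exactly as in the paper, via $g\,g^{-1}_H=1_H=1_G=g\,g^{-1}_G$ and cancellation.
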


\begin{proof}
	{\rm (1)} $1_H1_H=\al\left(1_H\right)=1_H=\al\left(1_H\right)=1_G1_H$, so $1_H=1_G$ by the law of cancellation in $G$.
	
	{\rm (2)} For each $g\in H$, we have $gg^{-1}_H=1_H=1_G=gg^{-1}_G$. Hence $g^{-1}_H=g^{-1}_G$  by the law of cancellation in $G$.
\end{proof}

\begin{theorem}
	Let $H$, $K$ be two Hom-subgroups of $G$. Then
	
	{\rm (1)} $H\cap K\preceq G$;
	
	{\rm (2)} $H\cup K\preceq G$ if and only if $H\subset K$ or $K\subset H$.
\end{theorem}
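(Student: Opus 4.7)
For part (1), the goal is to check the four axioms of a Hom-group for $H\cap K$ with the operations inherited from $G$. Nonemptiness follows from Proposition~\ref{eq1}, which gives $1_G=1_H=1_K\in H\cap K$. Closure under $\mu$ is immediate because if $a,b\in H\cap K$ then $ab\in H$ and $ab\in K$. The one point requiring a touch of care is that $\alpha$ must restrict to a \emph{bijection} on $H\cap K$. Since $H$ and $K$ are themselves Hom-subgroups, $\alpha|_H$ and $\alpha|_K$ are bijections onto $H$ and $K$ respectively, so any $x\in H\cap K$ has $\alpha(x)\in H\cap K$; injectivity is inherited from $\alpha$ on $G$, and surjectivity follows because for $y\in H\cap K$ its unique preimage $\alpha^{-1}(y)$ lies in both $H$ and $K$. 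Closure under inverses uses Proposition~\ref{eq1} again ($g^{-1}_H=g^{-1}_G=g^{-1}_K$), and Hom-associativity is inherited pointwise.

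For part (2), the ``if'' direction is immediate: $H\subset K$ forces $H\cup K=K$, which is a Hom-subgroup by hypothesis. For the ``only if'' direction, I plan to argue by contradiction: assume $H\cup K\preceq G$ but neither $H\subset K$ nor $K\subset H$, and pick $h\in H\setminus K$ and $k\in K\setminus H$. Then $hk\in H\cup K$, so $hk\in H$ or $hk\in K$. The hard part is that ordinary associativity is unavailable, so one cannot simply left- or right-multiply by an inverse to conclude. Instead I will exploit the Hom-associativity identity
\begin{equation*}
(g_1 g_2)\alpha(g_3)=\alpha(g_1)(g_2 g_3),
\end{equation*}
together with the fact that $\alpha$ restricts to a bijection on each Hom-subgroup.

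If $hk\in H$, apply Hom-associativity with $g_1=h^{-1}$, $g_2=h$, $g_3=k$ to get $\alpha(h^{-1})(hk)=(h^{-1}h)\alpha(k)=1\cdot\alpha(k)=\alpha^{2}(k)$. The left side lies in $H$ since $h^{-1},\alpha(h^{-1}),hk\in H$, so $\alpha^{2}(k)\in H$; because $\alpha|_H$ is a bijection on $H$, applying $(\alpha|_H)^{-2}$ yields $k\in H$, contradicting the choice of $k$. If instead $hk\in K$, the symmetric computation with $g_1=h$, $g_2=k$, $g_3=k^{-1}$ gives $(hk)\alpha(k^{-1})=\alpha(h)(kk^{-1})=\alpha^{2}(h)\in K$, and bijectivity of $\alpha|_K$ yields $h\in K$, again a contradiction. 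Thus one of the inclusions must hold, completing the proof. The main obstacle, as indicated, is that the usual ``multiply by the inverse'' cancellation trick must be reshaped through the twisted associativity law and the bijectivity of $\alpha$ on each Hom-subgroup.
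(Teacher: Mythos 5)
Your proof is correct and follows essentially the same route as the paper: in part (2) you use exactly the paper's device of writing $\alpha^{2}(k)=(h^{-1}h)\alpha(k)=\alpha(h^{-1})(hk)$ (resp.\ $\alpha^{2}(h)=(hk)\alpha(k^{-1})$) and then cancelling the bijection $\alpha$, while in part (1) the paper just invokes the one-step test $gh^{-1}\in H\cap K$ where you verify the Hom-group axioms directly. Your fuller check in (1), in particular that $\alpha$ restricts to a bijection of $H\cap K$, is if anything more careful than the paper, which never formally justifies the $gh^{-1}$ criterion for Hom-subgroups.
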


\begin{proof}
	{\rm (1)} Obviously, we have $1_G\in H\cap K\not= \emptyset$ by Proposition \ref{eq1}. For any $g,h\in H\cap K$, we have $g,h\in H$ and $g,h\in K$. So we have $gh^{-1}\in H$ and $gh^{-1}\in K$ by $H\preceq G$ and $K\preceq G$. Therefore, $gh^{-1}\in H\cap K$, which implies $H\cap K\preceq G$.
	
	{\rm (2)} $\left(\Longleftarrow\right)$ Assume that $H\subset K$ or $K\subset H$, we have $H\cup K=K$ or $H$, then $H\cup K\preceq G$.
	
	$\left(\Longrightarrow\right)$ Suppose that $H\cup K\preceq G$, we need to prove $H\subset K$ or $K\subset H$. Suppose that $H\subsetneq K$ and $K\subsetneq H$, then there exists $g\in H$ while $g\not\in K$, and there exists $h\in K$ while $h\not\in H$. Hence $g\in H\cup K$ and $h\in H\cup K$. Then $gh\in H\cup K$ follows from $H\cup K\preceq G$. Thus $gh\in H$ or $gh\in K$. Suppose that $gh\in H$. Since $H\preceq G$, we have $\al\left(g^{-1}\right)\in H$. The equality $\al^{2}\left(h\right)=1\al\left(h\right)=\left(g^{-1}g\right)\al\left(h\right)=\al\left(g^{-1}\right)\left(gh\right)$ implies that $\al^{2}\left(h\right)\in H$. Hence $h\in H$, which contradicts the assumption that $h\not\in H$. Similarly, if $gh\in K$, we have $g\in K$, which contradicts the assumption that $g\not\in K$. This shows that $H\subset K$ or $K\subset H$.
\end{proof}

\section{Hom-normal subgroups and Hom-quotient groups}\label{sec3}

In this section, we recall the definitions of a coset of Hom-groups. Then we focus on Hom-normal subgroups, which is the foundation of Hom-quotient groups. Finally, we set up Hom-quotient groups and prove that it is a Hom-group.

\begin{definition}{\rm \upcite{Mohammad Hassanzadeh}}
	Let $H$ be a Hom-subgroup of a Hom-group $G$. Let $a$ be an element of $G$, Denote $aH=\left\{ah|h\in H\right\}$ and $Ha=\left\{ha|h\in H\right\}$. The sets $aH$ and $Ha$ are called a Hom-left coset and a Hom-right coset of $H$ in $G$ respectively.
\end{definition}

\begin{proposition}\label{th-abcd}
	Let $H$ be a Hom-subgroup of a finite Hom-group $G$.
    For all $a,b\in G$ the following statements are equivalent:
	
	{\rm (1)} $aH=bH$.
	
	{\rm (2)} $aH\cap bH\not= \emptyset$.
	
	{\rm (3)} $a^{-1}b\in H$.
	
	{\rm (4)} $\al\left(b\right)\in aH$.
	
	{\rm (5)} $\al\left(a\right)H=\al\left(b\right)H$.
\end{proposition}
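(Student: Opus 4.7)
The plan is to establish the cycle $(1) \Rightarrow (2) \Rightarrow (3) \Rightarrow (4) \Rightarrow (1)$ and then the equivalence $(1) \Leftrightarrow (5)$. The first step is immediate: since $\al(a) = a \cdot 1 \in aH$, the coset $aH$ is nonempty, so $aH = bH$ forces $aH \cap bH = aH \neq \emptyset$.

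For $(2) \Rightarrow (3)$, starting from $ah_1 = bh_2$ with $h_1, h_2 \in H$, I would right-multiply by $\al(h_2^{-1})$ and apply Hom-associativity $(xy)\al(z) = \al(x)(yz)$ to collapse the right side to $\al^2(b)$, obtaining $\al^2(b) = \al(a)(h_1 h_2^{-1})$. Writing $k := h_1 h_2^{-1} \in H$ and applying $\al^{-1}$ (which is multiplicative by the earlier proposition) gives $\al(b) = a\,\al^{-1}(k)$. Left-multiplying by $\al(a^{-1})$, Hom-associativity collapses the right side to $(a^{-1}a)\,\al(\al^{-1}(k)) = 1 \cdot k = \al(k)$, while the left becomes $\al(a^{-1}b)$ by multiplicativity; injectivity of $\al$ then yields $a^{-1}b = k \in H$. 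For $(3) \Rightarrow (4)$, set $h := a^{-1}b \in H$; Hom-associativity gives $\al(a)h = (aa^{-1})\al(b) = \al^2(b)$, so $\al(b) = a\,\al^{-1}(h) \in aH$, using that $\al$ restricts to a bijection $H \to H$ because $H$ is itself a Hom-group. For $(4) \Rightarrow (1)$, write $\al(b) = a h_*$ and right-multiply by $\al(h_*^{-1})$ to obtain $\al(a) = bh_*^{-1}$; then for any $h \in H$,
\[
\al(ah) = \al(a)\al(h) = (bh_*^{-1})\al(h) = \al(b)(h_*^{-1}h)
\]
by Hom-associativity, so $ah = b\,\al^{-1}(h_*^{-1}h) \in bH$ and hence $aH \subseteq bH$. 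The reverse inclusion follows by symmetry: the identity $\al(a) = bh_*^{-1}$ shows that $\al(a) \in bH$, which is precisely $(4)$ with $a$ and $b$ swapped, so the same argument yields $bH \subseteq aH$.

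The equivalence $(1) \Leftrightarrow (5)$ reduces to $(1) \Leftrightarrow (3)$: by the identity $(\al(g))^{-1} = \al(g^{-1})$ (proved earlier) together with multiplicativity of $\al$, one has $(\al(a))^{-1}\al(b) = \al(a^{-1}b)$, and since $\al$ maps $H$ bijectively to $H$, the element $a^{-1}b$ lies in $H$ iff $(\al(a))^{-1}\al(b)$ does; applying the already-established $(1) \Leftrightarrow (3)$ at $(a,b)$ and at $(\al(a),\al(b))$ closes the loop. The main obstacle I anticipate is the bookkeeping in $(2) \Rightarrow (3)$ and $(4) \Rightarrow (1)$: because ordinary associativity fails, every regrouping via $(xy)\al(z) = \al(x)(yz)$ twists some factor by $\al$, so each manipulation must be arranged so that the stray $\al$'s ultimately line up into a clean equality like $\al(a^{-1}b) = \al(k)$ that the injectivity of $\al$ can resolve.
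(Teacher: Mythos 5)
Your proof is correct, and for the chain (1)$\Rightarrow$(2)$\Rightarrow$(3)$\Rightarrow$(4) your manipulations are essentially the paper's: cancel the stray Hom-subgroup factor by multiplying with a suitable $\al(h^{-1})$, regroup with $(xy)\al(z)=\al(x)(yz)$, and finish with injectivity of $\al$ (the paper left-multiplies by $\al(a^{-1})$ first and then right-multiplies by $\al^{2}(h_2^{-1})$, while you right-multiply first and then apply $\al^{-1}$, but the content is the same). Where you genuinely diverge is in how statement (5) is attached: the paper closes a single cycle (1)$\Rightarrow$(2)$\Rightarrow$(3)$\Rightarrow$(4)$\Rightarrow$(5)$\Rightarrow$(1), proving (4)$\Rightarrow$(5) and (5)$\Rightarrow$(1) by two element-chasing arguments inside the cosets $\al(a)H$ and $\al(b)H$, whereas you prove (4)$\Rightarrow$(1) directly and then obtain (1)$\Leftrightarrow$(5) by applying the already-established equivalence (1)$\Leftrightarrow$(3) to the pair $(\al(a),\al(b))$, together with $(\al(a))^{-1}\al(b)=\al(a^{-1}b)$ and the fact that $\al$ restricts to a bijection of $H$. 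Your reduction buys a shorter, more structural treatment of (5) that avoids two additional coset computations; the paper's route keeps everything as one uniform cycle of direct verifications and never needs to invoke the statement at $\al$-shifted arguments. (Both arguments rely on $\al$ mapping $H$ onto $H$, which holds because $(H,\al)$ is itself a Hom-group, and neither actually uses the finiteness hypothesis on $G$.)
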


\begin{proof}
$\left(1\right)\Longrightarrow\left(2\right)$ It\rq s obvious that $aH\cap bH\neq \emptyset$.
	
	$\left(2\right)\Longrightarrow\left(3\right)$
	There exist $h_1,h_2\in H$ such that $ah_1=bh_2$. By multiplying $\al\left(a^{-1}\right)$ to both sides of $ah_1=bh_2$ the equality $\al\left(a^{-1}\right)\left(bh_2\right)=\al\left(a^{-1}\right)\left(ah_1\right)$ is obtained. Then $$\left(a^{-1}b\right)\al\left(h_2\right)=\left(a^{-1}a\right)\al\left(h_1\right)=1\al\left(h_1\right)=\al^{2}\left(h_1\right).$$
	By multiplying $\al^{2}\left(h_2^{-1}\right)$ to both sides of $\left(a^{-1}b\right)\al\left(h_2\right)=\al^{2}\left(h_1\right)$, we have $\left(\left(a^{-1}b\right)\al\left(h_2\right)\right)\al^{2}\left(h_2^{-1}\right)=\al^{2}\left(h_1\right)\al^{2}\left(h_2^{-1}\right)$. Then
	\begin{align*}
	\al^{2}\left(h_1h_2^{-1}\right)&=\al\left(a^{-1}b\right)\left(\al\left(h_2\right)\al\left(h_2^{-1}\right)\right)\\
	&=\al\left(a^{-1}b\right)\al\left(h_2h_2^{-1}\right)\\
	&=\al\left(a^{-1}b\right)\al\left(1\right)\\
	&=\al^{2}\left(a^{-1}b\right).
	\end{align*}
	Hence $a^{-1}b=h_1h_2^{-1}\in H$ by $\al$ being invertible.
	
	$\left(3\right)\Longrightarrow\left(4\right)$
	Since $\al\left(b\right)=1b=\al^{-1}\left(1\right)b=\al^{-1}\left(aa^{-1}\right)b=a\left(\al^{-1}\left(a^{-1}\right)\al^{-1}\left(b\right)\right)=a\al^{-1}\left(a^{-1}b\right)$ and $a^{-1}b\in H$, we have $\al\left(b\right)=a\al^{-1}\left(a^{-1}b\right)\in aH$ by the fact that $\al$ is bijective.

    $\left(4\right)\Longrightarrow\left(5\right)$
    Let $\al\left(b\right)\in aH$. Then there exists $h_1\in H$ such that $\al\left(b\right)=ah_1$. For each $\al\left(b\right)h\in \al\left(b\right)H$, $\al\left(b\right)h=\left(ah_1\right)h$. Since there exists $h_2\in H$ such that $\al\left(h_2\right)=h$, we have $\al\left(b\right)h=\left(ah_1\right)\al\left(h_2\right)=\al\left(a\right)\left(h_1h_2\right)\in \al\left(a\right)H$. Thus $\al\left(b\right)H\subseteq\al\left(a\right)H$.
	
	In addition, by multiplying $\al\left(h_1^{-1}\right)$ to both sides of $\al\left(b\right)=ah_1$ the equality $\al\left(b\right)\al\left(h_1^{-1}\right)=\left(ah_1\right)\al\left(h_1^{-1}\right)$ is obtained, so $\al\left(bh_1^{-1}\right)=\al\left(a\right)\left(h_1h_1^{-1}\right)=\al\left(a\right)1=\al^{2}\left(a\right)$. Since $\al$ is invertible, we have $bh_1^{-1}=\al\left(a\right)$. And there exists $h_3\in H$ such that $\al\left(h_3\right)=h$, then $\al\left(a\right)h=\left(bh_1^{-1}\right)\al\left(h_3\right)=\al\left(b\right)\left(h_1^{-1}h_3\right)\in \al\left(b\right)H$. Thus $\al\left(a\right)H\subseteq\al\left(b\right)H$.
	
	Hence, $\al\left(a\right)H=\al\left(b\right)H$.
	
	$\left(5\right)\Longrightarrow\left(1\right)$
	For each $ah\in aH$, there exists $h_1\in H$ such that $\al\left(h\right)=h_1$. Then $\al\left(ah\right)=\al\left(a\right)\al\left(h\right)=\al\left(a\right)h_1$, which implies that $ah=\al^{-1}\left(\al\left(b\right)h_1\right)=b\al^{-1}\left(h_1\right)\in bH$. This shows that $aH\subseteq bH$. Similarlily, we have $bH\subseteq aH$. Therefore, we get $aH=bH$.
	
	So the proposition holds.
\end{proof}

\begin{definition}
	Let $H$ be a Hom-subgroup of a Hom-group $G$. If $gH=Hg$ for any $g\in G$, then $H$ is called a Hom-normal subgroup of $G$, denoted by $H\triangleleft G$.
\end{definition}

\begin{proposition}
		Let $H$ be a Hom-subgroup of a Hom-group $G$, for all $g\in G$ the following statements are equivalent:
	
	{\rm (1)} $gH=Hg$.
	
	{\rm (2)} For each $h\in H,\left(gh\right)\al\left(g^{-1}\right)\in H$.
	
	{\rm (3)} $\left(gH\right)\al\left(g^{-1}\right)\subseteq H$.
	
	{\rm (4)} $\left(gH\right)\al\left(g^{-1}\right)=H$.
	
	{\rm (5)} $\al\left(g\right)H=H\al\left(g\right)$.
	
\end{proposition}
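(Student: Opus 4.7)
The plan is to establish the cycle $(1) \Rightarrow (2) \Rightarrow (3) \Rightarrow (4) \Rightarrow (1)$ together with the side equivalence $(1) \Leftrightarrow (5)$. A basic observation used throughout: since $(H,\al)$ is a Hom-group under the product of $G$, the restriction of $\al$ is a bijection $H \to H$, so $\al^{\pm k}(H) = H$ for every integer $k$.

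The implications $(1) \Rightarrow (2) \Rightarrow (3)$ are short. If $gh = h'g$ with $h' \in H$, then by Hom-associativity
$$(gh)\al(g^{-1}) = (h'g)\al(g^{-1}) = \al(h')(gg^{-1}) = \al(h')\cdot 1 = \al^{2}(h') \in H,$$
giving $(2)$; and $(2)\Leftrightarrow(3)$ is simply the definition of $(gH)\al(g^{-1})$ as the set of its elements. For $(3)\Rightarrow(4)$, the map $\phi_g\colon H\to H$, $h\mapsto(gh)\al(g^{-1})$, is well-defined by $(3)$ and injective by the cancellation laws established earlier; hence it is bijective, which is exactly $H\subseteq (gH)\al(g^{-1})$.

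The key to $(4)\Rightarrow(1)$ is the identity
$$((gh)\al(g^{-1}))\al^{2}(g) = \al^{2}(gh),$$
obtained by applying Hom-associativity $(XY)\al(Z)=\al(X)(YZ)$ with $X=gh$, $Y=\al(g^{-1})$, $Z=\al(g)$, then using $\al(g^{-1})\al(g)=\al(1)=1$ and Hom-unitarity. Given $gh\in gH$, $(4)$ yields $h'=(gh)\al(g^{-1})\in H$, and the identity rearranges to $gh=\al^{-2}(h')\,g\in Hg$; conversely, given $hg\in Hg$, apply $(4)$ to $\al^{2}(h)\in H$ to produce $h_{0}\in H$ with $(gh_{0})\al(g^{-1})=\al^{2}(h)$, and the same identity then gives $gh_{0}=\al^{-2}(\al^{2}(h))\,g=hg$. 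The equivalence $(1)\Leftrightarrow(5)$ follows by applying $\al$ (resp.\ $\al^{-1}$) termwise to $gH=Hg$ (resp.\ $\al(g)H=H\al(g)$): since $\al$ is multiplicative and $\al(H)=H$, one has $\al(gH)=\al(g)H$ and $\al(Hg)=H\al(g)$.

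The main obstacle is the surjectivity in $(3)\Rightarrow(4)$: the injection $\phi_g$ only forces bijectivity through a cardinality count, so this step tacitly relies on finiteness of $H$, consistent with the finite setting that underlies Proposition \ref{th-abcd}.
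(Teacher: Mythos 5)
Your overall architecture (the cycle $(1)\Rightarrow(2)\Rightarrow(3)\Rightarrow(4)\Rightarrow(1)$ plus the side equivalence $(1)\Leftrightarrow(5)$) is sound, and two of your steps are actually cleaner than the paper's: the identity $((gh)\al(g^{-1}))\al^{2}(g)=\al^{2}(gh)$ settles $(4)\Rightarrow(1)$ in both directions at once, and deducing $(1)\Leftrightarrow(5)$ by applying $\al^{\pm1}$ termwise (using $\al(H)=H$ and multiplicativity) is more direct than the paper's route $(4)\Rightarrow(5)\Rightarrow(1)$. However, the step $(3)\Rightarrow(4)$ as you give it has a genuine gap, which you yourself flag: the map $\phi_g\colon H\to H$, $h\mapsto(gh)\al(g^{-1})$, is injective, but an injective self-map of an infinite set need not be surjective, and this proposition carries no finiteness hypothesis (contrast Proposition \ref{th-abcd}, where finiteness is explicitly assumed). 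Exactly as in classical group theory, $gHg^{-1}\subseteq H$ for a single fixed $g$ does not force equality when $H$ is infinite, so the counting argument cannot be repaired for a fixed $g$.

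The paper closes this gap by reading the five statements as universally quantified over $g$ and invoking the hypothesis at $g^{-1}$ as well. Concretely, for $h\in H$ one writes $h=\al^{2}(h')=\al(h')(gg^{-1})=(h'g)\al(g^{-1})$ with $h'\in H$, and then shows $h'g=g\bigl(\al^{-2}\bigl((g^{-1}h')\al(g)\bigr)\bigr)$ using Hom-associativity in the form $(\al^{-1}(g)x)y=g(x\al^{-1}(y))$; since $(g^{-1}h')\al(g)=(g^{-1}h')\al\left(\left(g^{-1}\right)^{-1}\right)\in H$ by condition $(3)$ applied to $g^{-1}$, and $\al^{-2}(H)=H$, this exhibits $h$ as an element of $(gH)\al(g^{-1})$. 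You should either adopt this route or make the universal quantification over $g$ explicit and apply your own condition $(3)$ at $g^{-1}$; as written, your argument only establishes the proposition for finite $H$.
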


\begin{proof}
	$\left(1\right)\Longrightarrow\left(2\right)$
	Let $gH=Hg$. Then there exist $h_1,h_2\in H$ such that $gh_1=h_2g$, hence we have $\left(gh_1\right)\al\left(g^{-1}\right)=\left(h_2g\right)\al\left(g^{-1}\right)=\al\left(h_2\right)\left(gg^{-1}\right)=\al\left(h_2\right)1=\al^{2}\left(h_1\right)\in H$.
	
	$\left(2\right)\Longrightarrow\left(3\right)$
	It is obvious that $\left(gH\right)\al\left(g^{-1}\right)\subseteq H$.
	
	$\left(3\right)\Longrightarrow\left(4\right)$
	We only need to show that $H\subseteq \left(gH\right)\al\left(g^{-1}\right)$. For every $h\in H$, there exists $h'$ such that
	\begin{align*}
	h&=\al^{2}\left(h'\right)\\
	&=\al\left(h'\right)1\\
	&=\al\left(h'\right)\left(gg^{-1}\right)\\
	&=\left(h'g\right)\al\left(g^{-1}\right).
	\end{align*}
	Note that
	\begin{align*}
	\left(\al\left(\al^{-1}\left(h'\right)\right)g\right)
	&=\left(\left(1\al^{-1}\left(h'\right)\right)g\right)\\
	&=\left(\left(\al^{-2}\left(gg^{-1}\right)\al^{-1}\left(h'\right)\right)g\right)\\
	&=\left(\left(\left(\al^{-2}\left(g\right)\al^{-2}\left(g^{-1}\right)\right)\al^{-1}\left(h'\right)\right)g\right)\\
	&=\left(\left(\al^{-1}\left(g\right)\left(\al^{-2}\left(g^{-1}\right)\al^{-2}\left(h'\right)\right)\right)g\right)\\
	&=\left(g\left(\left(\al^{-2}\left(g^{-1}\right)\al^{-2}\left(h'\right)\right)\al^{-1}\left(g\right)\right)\right).
	\end{align*}
	Due to $g^{-1}\in G$ and $\al$ being bijective, we have $\left( g^{-1}h'\right)\al\left(g\right)\in H$ and $ \al^{-2}\left(g^{-1}h'\right)\al^{-1}\left(g\right)\in H$ which implies that $\left(\al^{-2}\left(g^{-1}\right)\al^{-2}\left(h'\right)\right)\al^{-1}\left(g\right)\in H$. Then we have $h\in gH\al\left(g^{-1}\right)$ and $H\subseteq \left( gH\right)\al\left(g^{-1}\right)$.
	Therefore, $\left( gH\right)\al\left(g^{-1}\right)=H$ holds.
	
	$\left(4\right)\Longrightarrow\left(5\right)$
	For each $h\al\left(g\right)\in H\al\left(g\right)$, there exists $h_1$ such that $\al^{-1}\left(h_1\right)=h$, and there exists $h_2$ such that $h_1=\left(gh_2\right)\al\left(g^{-1}\right)$. Then we have $h\al\left(g\right)=\al^{-1}\left(h_1\right)\al\left(g\right)=\al^{-1}\left(gh_2\al\left(g^{-1}\right)\right)\al\left(g\right)=\al\left(\al^{-1}\left(gh_2\right)\right)\left(gg^{-1}\right)=\left(gh_2\right)1=\al\left(gh_2\right)=\al\left(g\right)\al\left(h_2\right)\in \al\left(g\right)H$. Thus $H\al\left(g\right)\subseteq\al\left(g\right)H$.
	
	Similarly, for each $\al\left(g\right)h\in \al\left(g\right)H$, there exists $h_3$ such that $\al^{-1}\left(h_3\right)=h$, and there exists $h_4$ such that $h_3=\left(g^{-1}h_4\right)\al\left(g\right)$. Then we have $\al\left(g\right)h=\al\left(g\right)\al^{-1}\left(h_3\right)=\al\left(g\right)\al^{-1}\left(\left(g^{-1}h_4\right)\al\left(g\right)\right)=\left(g\al^{-1}\left(g^{-1}h_4\right)\right)\al\left(g\right)=\left(\al^{-1}\left(gg^{-1}\right)h_4\right)\al\left(g\right)=\left(\al^{-1}\left(1\right)h_4\right)\al\left(g\right)=\left(1h_4\right)\al\left(g\right)=\al\left(h_4\right)\al\left(g\right)\in H\al\left(g\right)$. Thus $\al\left(g\right)H\subseteq H\al\left(g\right)$.
	
	Hence, $\al\left(g\right)H=H\al\left(g\right)$.
	
	$\left(5\right)\Longrightarrow\left(1\right)$ For every $g$, there exists $g'\in G$ such that $\al\left(g'\right)=g$. According to $\left(5\right)$, we have $\al\left(g'\right)H=H\al\left(g'\right)$. Thus, $gH=Hg$.
	
	So the proposition holds.
\end{proof}

\begin{example}
	Let $G$ be a Hom-group, then $Z\left(G\right)\triangleleft G$, where $Z\left(G\right)=\left\{h\in G|gh=hg, \forall g\in G\right\}$.
\end{example}

\begin{proof}
	By reference {\rm \cite{Mohammad Hassanzadeh}}, $Z\left(G\right)$ is a Hom-subgroup of $G$. For every $h\in Z\left(G\right), g\in G$, we have $\left(gh\right)\al\left(g^{-1}\right)=\left(hg\right)\al\left(g^{-1}\right)=\al\left(h\right)\left(gg^{-1}\right)=\al\left(h\right)1=\al^{2}\left(h\right)\in Z\left(G\right)$, which implies $Z\left(G\right)\triangleleft G$.
\end{proof}

\begin{proposition}
	If $H$ is a Hom-normal subgroup of a Hom-group $G$ and $G/H$ is the set of all (left) Hom-cosets of $H$ in $G$, then $G/H$ is a Hom-group under the binary operation given by $\widetilde{\mu}\left(aH,bH\right) =\mu\left(a,b\right)H$ with $\widetilde{\al}\left(aH\right)=\alpha\left(a\right)H$.
\end{proposition}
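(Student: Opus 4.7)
The proposition asks for four verifications: that both $\widetilde{\mu}$ and $\widetilde{\al}$ are well-defined on cosets; that $\widetilde{\al}$ is a bijective multiplicative map on $(G/H,\widetilde{\mu})$; that $\widetilde{\mu}$ is Hom-associative with Hom-unit $1H$; and that each coset $aH$ admits an inverse, for which the natural candidate is $a^{-1}H$. My plan is to handle the well-definedness issues first, since the remaining axioms transport almost verbatim from $G$ along the canonical assignment $a \mapsto aH$.

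The main obstacle will be well-definedness of $\widetilde{\mu}$: assuming $aH = a'H$ and $bH = b'H$, I need to conclude $(ab)H = (a'b')H$. This is precisely where Hom-normality becomes essential. By Proposition~\ref{th-abcd}(4), the hypotheses give $h_1,h_2 \in H$ with $\al(a') = a h_1$ and $\al(b') = b h_2$, and then multiplicativity of $\al$ yields $\al(a'b') = (a h_1)(b h_2)$. By the same proposition applied in reverse, it suffices to show $(a h_1)(b h_2) \in (ab)H$. The plan is to rearrange this product using Hom-associativity in the forms $\al(g)(hk) = (gh)\al(k)$ and $(\al^{-1}(g)h)k = g(h \al^{-1}(k))$ already established in Section~\ref{sec2.5}, together with Hom-normality rewritten as $h_1 b = b h_4$ for some $h_4 \in H$ (coming from $Hb = bH$). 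Threading these substitutions should collapse $(a h_1)(b h_2)$ to the form $(ab)(h_4 h_2)$, which manifestly lies in $(ab)H$.

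Well-definedness of $\widetilde{\al}$ is exactly Proposition~\ref{th-abcd}(5). Its bijectivity follows from bijectivity of $\al$ in $G$, with inverse $aH \mapsto \al^{-1}(a)H$ (which is well-defined by the same equivalence). Multiplicativity of $\widetilde{\al}$ with respect to $\widetilde{\mu}$ reduces to $\al(ab) = \al(a)\al(b)$ in $G$. Hom-associativity in $G/H$ follows by applying Hom-associativity in $G$ to representatives: $\widetilde{\al}(aH)((bH)(cH)) = (\al(a)(bc))H = ((ab)\al(c))H = ((aH)(bH))\widetilde{\al}(cH)$. The coset $1H$ acts as Hom-unit since $(aH)(1H) = (a \cdot 1)H = \al(a)H = \widetilde{\al}(aH)$, and symmetrically on the left. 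Finally, $a^{-1}H$ is an inverse of $aH$ since $(aH)(a^{-1}H) = (aa^{-1})H = 1H$, and $1H$ is the Hom-unit of $G/H$. None of these verifications presents any real difficulty once the well-definedness of $\widetilde{\mu}$ is settled.
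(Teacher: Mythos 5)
Your proposal is correct and follows essentially the same route as the paper: both hinge on reducing well-definedness of $\widetilde{\mu}$ to one of the coset-equality criteria of Proposition \ref{th-abcd} via a Hom-associativity rearrangement that invokes normality to swap an element of $H$ past $b$, and then verify the remaining axioms by transporting them along representatives (your claimed collapse $(ah_1)(bh_2)=(ab)(h_4h_2)$ does go through with the two associativity identities you cite). The only difference is cosmetic: you use criterion (4), showing $\al(a'b')\in (ab)H$, whereas the paper uses criterion (3), showing $(a_1b_1)^{-1}(a_2b_2)\in H$.
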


\begin{proof}First, we verify the two definitions are well-defined.

Verify $\widetilde{\mu}: G/H\times G/H\longrightarrow G/H$ is well-defined:
Assume that $\overline{a_{1}}=\overline{a_{2}}$ and $\overline{b_{1}}=\overline{b_{2}}$, we need to prove $\overline{a_{1}} \overline{b_{1}}=\overline{a_{2}} \overline{b_{2}}$, that is $\overline{a_{1} b_{1}}=\overline{a_{2} b_{2}}$. Since
	\begin{eqnarray*}
		\left(a_{1} b_{1}\right)^{-1} \left(a_{2} b_{2}\right)&=& \left(b_{1}^{-1}  a_{1}^{-1}\right) \left(a_{2} b_{2}\right)\\
		&=& \al\left(b_{1}^{-1}\right)\left(a_{1}^{-1}  \al^{-1}\left(a_{2} b_{2}\right)\right)\\
		&=& \al\left(b_{1}^{-1}\right)\left(\left(\al^{-1}\left(a_{1}^{-1}\right) \al^{-1}\left(a_{2}\right)\right) \left(\alpha \al^{-1}\left(b_{2}\right)\right)\right)\\
		&=& \al\left(b_{1}^{-1}\right) \left(\al^{-1}\left(a_{1}^{-1} a_{2}\right) b_{2}\right),
	\end{eqnarray*}
	and $ \overline{a}_{1}=\overline{a}_{2}$ which implies that $a_{1}H=a_{2}H$, there exists $h\in H$ such that $a_{1}^{-1}a_{2}=h$. Then we get $\al\left(b_{1}^{-1}\right) \left(\al^{-1}\left(a_{1}^{-1} a_{2}\right) b_{2}\right) =\al\left(b_{1}^{-1}\right)\left(\alpha^{-1}(h)  b_{2}\right)=\al\left(b_{1}^{-1}\right) \left(b_{2} h^{\prime}\right)=\left(b_{1}^{-1} b_{2}\right)  \al\left(h^{\prime}\right) \in H$, that is $\left(a_{1} b_{1}\right) H=\left(a_{2} b_{2}\right) H$. Therefore, $\widetilde{\mu}$ is well-defined.
		
Verify $\widetilde{\al}: G/H\longrightarrow G/H$ is well-defined: Assume that $aH=bH$, we have $\al\left(a\right)H=\al\left(b\right)H$ by proposition \ref{th-abcd}. Therefore, $\widetilde{\alpha}$ is well-defined.

    Now, we prove $G/H$ is a Hom-group:

    (1) $\tilde{\al}(\overline{g})(\overline{h} \cdot\overline{k})=\widetilde{\al}(g H)(h H k H)=\tilde{\al}(g H)(h k) H=\al(g) H (h k) H=\left(\al(g)(h k)\right) H=\left((g h) \al(k)\right)H=(g h) H  \al(k) H=(g H  k H) \al(k) H=(g H k H)  \widetilde{\al}(k H)$, then the Hom-associativity property is obtained.
	
    (2) It is obvious that  $\widetilde{\al}$ is a bijective set map with $\al$ bijective. By $\widetilde{\al}\left(\left(g H\right)\left( k H\right)\right)=\widetilde{\al}\left(gkH\right)=\al(g k)H=\left(\al(g)\al(k)\right)H=\left(\al(g)H\right)\left(\al(k) H\right)=\widetilde{\al}\left(gH\right)\widetilde{\al}\left(kH\right)$, $\widetilde{\al}$ is multiplicative.
	
	(3) $H$ is the identity element: since $(gH)H=(gH) (1_{H}H)=(g1_{H})H=\al(g)H=\widetilde{\al}(gH)$, we have $(gH)H=H(gH)=\widetilde{\alpha}\left( g H\right) $.
	
	(4) For all $gH$, there exists $ g^{-1}H$ such that $(gH)(g^{-1}H)=(gg^{-1})H=1_{H}H=H$.
	
	This shows that $G/H$ is a Hom-group.
\end{proof}

\section{Homomorphisms of Hom-groups}\label{sec4}

In this section, we introduce the main results of this paper, about homomorphisms of Hom-groups. One of the most important theorems is the fundamental theorem of homomorphisms of Hom-groups. We also obtain a great many properties of homomorphisms of Hom-groups.

\begin{definition}{\rm \upcite{Mohammad Hassanzadeh}}
	Let $\left(G,\al\right)$, $\left(H,\beta\right)$ be two Hom-groups. The map $f:G\longrightarrow H$ is called a homomorphism of Hom-groups, if $f$ satisfies the following two conditions:
	
{\rm (1)} For all $g,k\in G $, $f\left( gk\right)=f\left( g\right)f\left( k\right)$.

{\rm (2)} For each $g\in G$, $\beta \left( f\left( g\right) \right)=f\left( \alpha \left( g\right) \right)$.

Moreover, if $f:G\longrightarrow H$ is bijective, then we call $f$ isomorphic.
\end{definition}

\begin{definition}
 A map $\phi: G \to H$ is called a weak homomorphism of Hom-groups if $\phi (1_G) = 1_H$ and $\beta \circ \phi (g_1 g_2) = (\phi \circ \alpha (g_1))(\phi \circ \alpha (g_2))$.
\end{definition}

\begin{proposition}
	Let $f:\left(G,\alpha\right)\longrightarrow \left(H,\beta\right)$ be a homomorphism of Hom-groups. Then
	
	{\rm (1)} $f\left( 1_{G}\right) =1_{H}$.
	
	{\rm (2)} For each $g\in G$, $f\left(g^{-1}\right)=\left(f\left(g\right)\right)^{-1}$.
\end{proposition}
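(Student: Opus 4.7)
The plan is to use the characterization of the unit established earlier (the proposition that in a Hom-group, if $gg=\alpha(g)=g\cdot 1$, then $g=1$) applied in the target Hom-group $H$ to the element $f(1_G)$, and then to deduce (2) from (1) together with the uniqueness of inverses in $H$.

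For (1), I would first show that $\alpha(1_G)=1_G$ (already stated as a prior fact), which then gives $1_G\cdot 1_G=\alpha(1_G)=1_G$ in $G$. Applying the homomorphism and using condition (1) of a homomorphism,
\[
f(1_G)\,f(1_G)=f(1_G\cdot 1_G)=f(1_G).
\]
Next, using condition (2) of a homomorphism together with $\alpha(1_G)=1_G$,
\[
\beta(f(1_G))=f(\alpha(1_G))=f(1_G),
\]
and Hom-unitarity in $H$ gives $f(1_G)\cdot 1_H=\beta(f(1_G))=f(1_G)$. Thus, setting $x=f(1_G)\in H$, we have verified
\[
x\,x=\beta(x)=x\cdot 1_H,
\]
so the earlier proposition (applied in the Hom-group $(H,\beta)$) forces $x=1_H$, i.e.\ $f(1_G)=1_H$.

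For (2), I would combine (1) with the uniqueness of the inverse in $H$ (another previously listed property). Using the defining axiom $gg^{-1}=g^{-1}g=1_G$ and the homomorphism property,
\[
f(g)\,f(g^{-1})=f(gg^{-1})=f(1_G)=1_H,\qquad f(g^{-1})\,f(g)=f(g^{-1}g)=f(1_G)=1_H.
\]
Hence $f(g^{-1})$ satisfies both sides of the defining relation for an inverse of $f(g)$ in $H$, and by uniqueness of inverses in a Hom-group, $f(g^{-1})=(f(g))^{-1}$.

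There is no real obstacle; the only point that needs care is making sure to invoke the earlier proposition about the characterization $gg=\alpha(g)=g1 \Rightarrow g=1$ in the correct Hom-group (here, $H$ with twisting $\beta$), since the ordinary group argument $x\,x=x\Rightarrow x=1$ is unavailable in the Hom-setting. Once (1) is in hand, (2) is a routine application of the uniqueness of Hom-group inverses.
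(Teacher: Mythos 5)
Your proof is correct, and for part (2) it coincides exactly with the paper's argument (exhibit $f(g^{-1})$ as a two-sided inverse of $f(g)$ and invoke uniqueness of inverses). For part (1) the underlying computation is the same, but the closing step differs: the paper writes $f(1_G)f(1_G)=f(1_G1_G)=f(\alpha(1_G))=\beta(f(1_G))=1_Hf(1_G)$ and concludes by the right-cancellation law, whereas you verify $x\,x=\beta(x)=x\cdot 1_H$ for $x=f(1_G)$ and invoke the earlier proposition that this forces $x=1_H$. Both auxiliary results are proved earlier in the paper and the two finishes are of equal weight; the only substantive difference is that your route also uses $\alpha(1_G)=1_G$ (to get $f(1_G)f(1_G)=f(1_G)$), which the paper's cancellation argument does not need. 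Your care in applying the characterization in $(H,\beta)$ rather than in $G$ is exactly right.
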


\begin{proof}
	{\rm (1)} According to  $f\left(1_G\right)f\left(1_G\right)=f\left(1_G1_G\right)=f\left(\al\left(1_G\right)\right)=\beta\left(f\left(1_G\right)\right)=1_Hf\left(1_G\right)$ and the law of right cancellation, we have $f\left( 1_{G}\right) =1_{H}$.
	
	{\rm (2)} For each $g\in G$, we have  $$f\left(g^{-1}\right)f\left(g\right)=f\left(g^{-1}g\right)=f\left(1_G\right)=1_H,$$ $$f\left(g\right)f\left(g^{-1}\right)=f\left(gg^{-1}\right)=f\left(1_G\right)=1_H,$$ which imply that $f\left(g^{-1}\right)=\left(f\left(g\right)\right)^{-1}$.
\end{proof}

\begin{proposition}\label{eq2}
	Let $f:\left(G,\alpha\right)\longrightarrow \left(H,\beta\right)$ be a homomorphism of Hom-groups. Then $Kerf=\lbrace g\in G|f\left(g\right)=1_{H}\rbrace$ is a Hom-normal subgroup of $G$.
\end{proposition}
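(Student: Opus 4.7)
The plan is to first show $\mathrm{Ker}\,f$ is a Hom-subgroup of $G$, and then apply the equivalent characterization of Hom-normality established earlier (condition (2) of the proposition after the definition of Hom-normal subgroup), namely that for every $g \in G$ and every $h \in \mathrm{Ker}\,f$ we have $(gh)\alpha(g^{-1}) \in \mathrm{Ker}\,f$.

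For the Hom-subgroup part, I would first observe that $1_G \in \mathrm{Ker}\,f$ by the previous proposition (since $f(1_G)=1_H$), so the set is non-empty. Closure under the product is immediate: if $g,h \in \mathrm{Ker}\,f$, then $f(gh)=f(g)f(h)=1_H \cdot 1_H = \beta(1_H) = 1_H$, using $\beta(1_H)=1_H$ from the proposition in Section~\ref{sec2.5}. Closure under $\alpha$ follows from the homomorphism property $f(\alpha(g))=\beta(f(g))=\beta(1_H)=1_H$, and closure under inverses follows from $f(g^{-1}) = f(g)^{-1} = 1_H^{-1} = 1_H$. Since $\alpha$ restricts to a bijection on $\mathrm{Ker}\,f$, and Hom-associativity is inherited from $G$, these verifications suffice to conclude $\mathrm{Ker}\,f \preceq G$.

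For the Hom-normality, I would pick arbitrary $g \in G$ and $h \in \mathrm{Ker}\,f$ and compute
\begin{align*}
f\bigl((gh)\alpha(g^{-1})\bigr) &= f(gh)\, f(\alpha(g^{-1})) = \bigl(f(g)f(h)\bigr)\,\beta(f(g^{-1})) \\
&= \bigl(f(g)\cdot 1_H\bigr)\,\beta\bigl(f(g)^{-1}\bigr) = \beta(f(g))\cdot \beta(f(g))^{-1} = 1_H,
\end{align*}
which gives $(gh)\alpha(g^{-1}) \in \mathrm{Ker}\,f$, so by the equivalence established in the previous proposition, $\mathrm{Ker}\,f \triangleleft G$.

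The main obstacle is not conceptual but bookkeeping: one must consistently use the identities $\beta(1_H)=1_H$, $\beta(f(g))=f(\alpha(g))$, and $(f(g))^{-1}=f(g^{-1})$ at the right moments, and be careful that the product $1_H \cdot 1_H$ equals $\beta(1_H)=1_H$ rather than simply $1_H$ by associativity (which fails in the Hom-setting). Once these Hom-identities are invoked cleanly, the argument parallels the classical group-theoretic proof.
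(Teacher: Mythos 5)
Your proof is correct and follows essentially the same route as the paper: the normality is established via the criterion $(gh)\alpha(g^{-1})\in \mathrm{Ker}\,f$ with the same chain of identities (the paper finishes via $\beta(f(g))\beta(f(g^{-1}))=\beta(f(gg^{-1}))=\beta(1_H)$, while you invoke $\beta(f(g)^{-1})=(\beta(f(g)))^{-1}$, a negligible difference). The only divergence is that you verify the Hom-subgroup property of $\mathrm{Ker}\,f$ directly, whereas the paper simply cites Lemma 2.11 of \cite{MH} for that step.
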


\begin{proof}
	By reference \cite[Lemma 2.11]{MH}, we know $Kerf$ is a Hom-subgroup of $G$, now we need to prove $Kerf$ is a Hom-normal subgroup of $G$, that is for every $g\in G$, $gKerf=Kerfg$.

	We only need to prove for all $h\in Kerf$, $g\in G$, we have $\left( gh\right) \alpha\left( g^{-1}\right) \in Kerf$.

	Since $f \left( \left( gh\right) \al \left( g^{-1}\right) \right) = f \left( gh\right)\beta \left( f \left( g^{-1}\right)  \right) =\left( f\left( g\right) f\left( h\right) \right)\beta \left( f\left( g^{-1}\right) \right) =\left( f\left( g\right) 1_{H}\right) \beta\left( f\left( g^{-1}\right) \right) = \beta(f(g)) \beta(f(g^{-1})) = \beta (f(g g^{-1}))=\beta(f(1_{G}))=\beta(1_{H})=1_{H} $, we have $(gh) \alpha(g^{-1})\in Kerf$, which completes the proof.
\end{proof}

\begin{proposition}
    Let $f:\left(G,\alpha\right)\longrightarrow \left(H,\beta\right)$ be a homomorphism of Hom-groups, $Imf=\lbrace f\left(g\right)|g\in G\rbrace$ is a Hom-subgroup of $H$.
\end{proposition}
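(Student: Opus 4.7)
The plan is to verify directly that $\mathrm{Im}\,f$ satisfies the defining axioms of a Hom-subgroup of $(H,\beta)$ as per the definition in Section \ref{sec2.5}, namely that it is a nonempty subset closed under the product of $H$ and is itself a Hom-group under this inherited product (with the inherited twisting map $\beta$).

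First, I would observe that $\mathrm{Im}\,f$ is nonempty since $1_H = f(1_G) \in \mathrm{Im}\,f$ by the previous proposition. Closure under the product is immediate from the homomorphism condition: for $f(g_1), f(g_2) \in \mathrm{Im}\,f$, we have $f(g_1)f(g_2) = f(g_1 g_2) \in \mathrm{Im}\,f$. Next, I would check that $\beta$ restricts to a map from $\mathrm{Im}\,f$ to itself: for $f(g) \in \mathrm{Im}\,f$, the second homomorphism condition gives $\beta(f(g)) = f(\alpha(g)) \in \mathrm{Im}\,f$.

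The main thing to be careful about, and the step I expect to be the mild obstacle, is verifying that $\beta$ restricted to $\mathrm{Im}\,f$ remains bijective (since the definition of a Hom-group demands a bijective twisting map). Injectivity is automatic from the bijectivity of $\beta$ on all of $H$. For surjectivity onto $\mathrm{Im}\,f$, given any $f(g) \in \mathrm{Im}\,f$, I would exhibit a preimage inside $\mathrm{Im}\,f$ by setting $g' = \alpha^{-1}(g)$, which exists since $\alpha$ is bijective on $G$; then $\beta(f(g')) = f(\alpha(\alpha^{-1}(g))) = f(g)$, and $f(g') \in \mathrm{Im}\,f$ by construction.

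Finally, I would note that the Hom-associativity, the multiplicativity of $\beta$, and the Hom-unitarity axiom are inherited from $(H,\beta)$ since they hold for all elements. The unit $1_H \in \mathrm{Im}\,f$ serves as the Hom-unit, and each $f(g) \in \mathrm{Im}\,f$ has inverse $(f(g))^{-1} = f(g^{-1}) \in \mathrm{Im}\,f$ by the previous proposition. Combining all these verifications shows that $(\mathrm{Im}\,f, \beta)$ is a Hom-group under the product of $H$, and hence $\mathrm{Im}\,f \preceq H$.
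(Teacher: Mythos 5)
Your proof is correct and follows essentially the same route as the paper: verify closure under the product, check that $\beta$ restricts to a bijection of $\mathrm{Im}\,f$, and inherit the remaining Hom-group axioms from $(H,\beta)$. In fact you are more careful than the paper's brief argument, which asserts without detail that $\beta$ is bijective on $f(G)$ and that the unit and inverses lie in $f(G)$ — points you verify explicitly via $\alpha^{-1}$ and the earlier proposition $f(1_G)=1_H$, $f(g^{-1})=(f(g))^{-1}$.
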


\begin{proof}
    Since $\beta\left(f\left(x\right)\right)=f\left(\alpha\left(x\right)\right)\in f\left(G\right)$, we have $\beta$ is a bijective set map from $f\left(G\right)$ to $f\left(G\right)$. According to $f\left(g\right) f\left(g^{\prime}\right)=f\left(gg^{\prime}\right)\in f\left(G\right)$, the closure of binary operation in $f\left(G\right)$ is obtained.
	Because $\left(H,\beta\right)$ is a Hom-group, we have $\beta$ and the operation of $f(g)$  satisfy four conditions of definition, which implies that $Imf=\lbrace f\left(g\right)|g\in G\rbrace$ is a Hom-subgroup of $H$.
\end{proof}

\begin{proposition}
	Let $f:\left(G,\alpha\right)\longrightarrow \left(H,\beta\right)$ be a homomorphism of Hom-groups. If $G'$ is a Hom-subgroup of $G$ and $H'$ is a Hom-subgroup of $H$, then
	
	{\rm (1)} $f\left(G'\right)$ is a Hom-subgroup of $H$.
	
	{\rm (2)} $f^{-1}\left(H'\right)$ is a Hom-subgroup of $G$.
\end{proposition}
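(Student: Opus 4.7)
The plan is to verify, in each case, the defining conditions of a Hom-subgroup: nonemptiness, closure under the product inherited from the ambient Hom-group, closure under the ambient twisting map (as a bijection restricted to the subset), and the axioms of a Hom-group (Hom-associativity, Hom-unitarity, existence of inverses). Hom-associativity and Hom-multiplicativity are inherited automatically once closure is established, so the real work is to check closure under products, closure under (and bijectivity of) the twisting map, the presence of $1$, and the presence of inverses.

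For part (1), I would first observe that $1_H = f(1_G) \in f(G')$ since $1_G \in G'$, giving nonemptiness. Closure under the product in $H$ follows from $f(g_1)f(g_2) = f(g_1 g_2)$ together with closure of $G'$ in $G$. For each $f(g) \in f(G')$ with $g \in G'$, the inverse $(f(g))^{-1} = f(g^{-1})$ lies in $f(G')$ because $g^{-1} \in G'$. The genuinely structural point is that the restriction of $\beta$ to $f(G')$ must be a bijection of $f(G')$ onto itself; this is where I would use the compatibility identity $\beta \circ f = f \circ \alpha$ to compute
\begin{equation*}
\beta(f(G')) = f(\alpha(G')) = f(G'),
\end{equation*}
since $\alpha$ restricts to a bijection on $G'$ (because $G'$ is a Hom-subgroup). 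Injectivity of $\beta$ on $f(G')$ is inherited from its injectivity on $H$.

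For part (2), I would proceed symmetrically. Nonemptiness: $1_G \in f^{-1}(H')$ because $f(1_G) = 1_H \in H'$. Closure under the product: if $g_1, g_2 \in f^{-1}(H')$, then $f(g_1 g_2) = f(g_1) f(g_2) \in H'$ by closure of $H'$. Closure under inverses: if $g \in f^{-1}(H')$, then $f(g^{-1}) = (f(g))^{-1} \in H'$. To see $\alpha$ restricts to a bijection of $f^{-1}(H')$, I would use $\beta \circ f = f \circ \alpha$: for $g \in f^{-1}(H')$, $f(\alpha(g)) = \beta(f(g)) \in H'$ (since $\beta$ preserves $H'$), giving $\alpha(f^{-1}(H')) \subseteq f^{-1}(H')$. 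Conversely, for $g' \in f^{-1}(H')$ there is a unique $g \in G$ with $\alpha(g) = g'$, and then $\beta(f(g)) = f(g') \in H'$; since $\beta$ restricts to a bijection of $H'$, we deduce $f(g) \in H'$, i.e.\ $g \in f^{-1}(H')$.

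The main (mildly subtle) obstacle in both parts is this bijectivity of the twisting map on the candidate subset: one must invoke the compatibility $\beta \circ f = f \circ \alpha$ together with the fact that $\alpha$ (resp.\ $\beta$) is already known to restrict bijectively to $G'$ (resp.\ $H'$). Once that is in hand, Hom-associativity, Hom-multiplicativity, Hom-unitarity and the inverse axiom all descend immediately from the ambient Hom-group, completing the verification that $f(G') \preceq H$ and $f^{-1}(H') \preceq G$.
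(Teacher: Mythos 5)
Your proof is correct, and it is worth contrasting with the paper's. The paper argues exactly as it did for the intersection of two Hom-subgroups: it notes nonemptiness and then applies a one-step subgroup test, computing $f\left(a\right)\left(f\left(b\right)\right)^{-1}=f\left(ab^{-1}\right)\in f\left(G'\right)$ for part (1) and $f\left(ab^{-1}\right)=f\left(a\right)\left(f\left(b\right)\right)^{-1}\in H'$ for part (2), and stops there. You instead verify the definition axiom by axiom: nonemptiness via $1$, closure under the product, closure under inverses, and --- the point you rightly flag as the structural one --- that the ambient twist restricts to a bijection of the candidate subset, using $\beta\circ f=f\circ\alpha$ together with the fact that $\alpha$ (resp.\ $\beta$) already restricts bijectively to $G'$ (resp.\ $H'$). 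The paper's route is shorter, but it silently relies on a ``nonempty and closed under $ab^{-1}$'' criterion for Hom-subgroups that is never stated or proved in the paper, and in the Hom setting that criterion does not obviously deliver closure under $\al$ and $\al^{-1}$ (note that taking $a=1$, $b=g$ in the test only produces $\al\left(g^{-1}\right)$, not $g^{-1}$), even though the paper's own definition of a Hom-subgroup requires $\left(H,\al\right)$ to be a Hom-group, hence the twist to map the subset bijectively onto itself. Your fuller verification covers precisely the condition the paper leaves implicit, at the cost of being longer; the paper's version buys brevity by deferring to an unproved criterion.
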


\begin{proof}
	Obviously, $f\left(G'\right),f^{-1}\left(H'\right)\not= \emptyset$.
	
	{\rm (1)} For all $g,h\in f\left(G'\right)$, they can be written as $f\left(a\right)$, $f\left(b\right)$, where $a,b\in G'$. Then we have $$f\left(a\right)\left(f\left(b\right)\right)^{-1}=f\left(a\right)f\left(b^{-1}\right)=f\left(ab^{-1}\right)\in f\left(G'\right).$$ Thus, $f\left(G'\right)$ is a Hom-subgroup of $H$.
	
	{\rm (2)} For all $ a,b\in f^{-1}\left(H'\right)$, we have $f\left(a\right),f\left(b\right)\in H'$. Since $H'$ is a Hom-subgroup of $H$, we know $$f\left(ab^{-1}\right)=f\left(a\right)f\left(b^{-1}\right)=f\left(a\right)\left(f\left(b\right)\right)^{-1}\in H',$$ which implies that $ab^{-1}\in f^{-1}\left(H'\right)$. Therefore, $f^{-1}\left(H'\right)$ is a Hom-subgroup of $G$.
\end{proof}

\begin{proposition}
	Let $f:\left(G,\alpha\right)\longrightarrow \left(H,\beta\right)$ be a homomorphism of Hom-groups, then
	
	{\rm (1)} $f$ is a monomorphism $\Longleftrightarrow Kerf=\lbrace 1_{G}\rbrace$.
	
	{\rm (2)} $f$ is an epimorphism $\Longleftrightarrow Imf=H$.
	
	{\rm (3)} $f$ is an isomorphism $\Longleftrightarrow Kerf=\lbrace 1_{G}\rbrace$ and $Imf=H$.
\end{proposition}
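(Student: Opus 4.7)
Item (2) is immediate once one unwinds definitions: a homomorphism is an epimorphism (surjection) precisely when its image exhausts the codomain, so $f$ epi $\iff \mathrm{Im}\,f = H$. Item (3) will follow formally from (1) and (2) together with the definition of isomorphism as a bijective homomorphism: $f$ is bijective iff $f$ is both injective and surjective, and these two conditions translate into $\mathrm{Ker}\,f = \{1_G\}$ and $\mathrm{Im}\,f = H$ by (1) and (2). So the real work lies in proving (1).

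For (1), the forward direction is easy. Assume $f$ is a monomorphism (injection). Since we have already shown $f(1_G) = 1_H$, any $g \in \mathrm{Ker}\,f$ satisfies $f(g) = 1_H = f(1_G)$, and injectivity forces $g = 1_G$. Hence $\mathrm{Ker}\,f = \{1_G\}$.

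For the backward direction of (1), the plan is as follows. Suppose $\mathrm{Ker}\,f = \{1_G\}$ and $f(a) = f(b)$; we want $a = b$. First compute
\begin{align*}
f(ab^{-1}) = f(a)\,f(b^{-1}) = f(a)\,\bigl(f(b)\bigr)^{-1} = f(b)\,\bigl(f(b)\bigr)^{-1} = 1_H,
\end{align*}
using the earlier proposition that $f(b^{-1}) = f(b)^{-1}$. Hence $ab^{-1} \in \mathrm{Ker}\,f = \{1_G\}$, giving $ab^{-1} = 1_G$. Since also $bb^{-1} = 1_G$, we obtain the equality $ab^{-1} = bb^{-1}$, and the right-cancellation law (already established in Section~\ref{sec2.5}) yields $a = b$.

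The only subtlety to watch for, and what I would flag as the main obstacle, is that one cannot freely ``multiply both sides by $b^{-1}$'' and collapse the result in a Hom-group the way one does in a group: the product is only Hom-associative, so a naive manipulation will introduce stray $\alpha$'s. The cleanest workaround is exactly what I have outlined above, namely to reduce the task to an instance of the already-proved cancellation law. (As a backup, one can avoid cancellation by invoking Hom-associativity directly in the form $(ab^{-1})\alpha(b) = \alpha(a)(b^{-1}b)$, simplifying both sides to $\alpha^2(b)$ and $\alpha^2(a)$ respectively, and using invertibility of $\alpha$; but the cancellation route is shorter and re-uses the prior infrastructure of the paper.)
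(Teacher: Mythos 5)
Your proposal is correct and follows essentially the same route as the paper: the forward direction of (1), the reduction $f(ab^{-1})=1_H$ hence $ab^{-1}=1_G$, and the dismissal of (2) and (3) as definitional are all identical. The only difference is cosmetic: you close the argument by rewriting $ab^{-1}=1_G=bb^{-1}$ and citing the right-cancellation law, whereas the paper multiplies by $\alpha(y)$ and uses Hom-associativity with invertibility of $\alpha$ directly --- which is exactly the ``backup'' computation you already sketched.
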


\begin{proof}
	{\rm (1)} $\left( \Longrightarrow\right)  \forall a\in Kerf, f(a)=1_{H}=f\left(1_{G}\right)$ since $f$ is a monomorphism, then $a=1_{G}$, that is $Kerf=\left\lbrace 1_{G}\right\rbrace$.
	
	$\left( \Longleftarrow\right) $ If $f(x)=f(y)$, then we have
	$f\left(xy^{-1}\right)=f(x) f(y^{-1} )=f(x) f(y)^{-1}=f(x) f(x)^{-1}=1_{H}$, which implies that $xy^{-1}=1_{G}$. Since $\left(xy^{-1}\right) \alpha(y)=1_{G}\alpha(y)=\alpha^{2}(y)$
	and $\left(xy^{-1}\right) \alpha(x)=\alpha(x)(y^{-1}y)=\alpha(x)(1_G)=\al^{2}(x)$, we have $x=y$. Therefore $f$ is a monomorphism.
	
	{\rm (2)} $Imf$ is a Hom-subgroup of $H$, by the definition of $Imf$, $f$ is an epimorphism if and only if $Imf=H$.
	
	{\rm (3)} It is obvious that $f$ is an isomorphism if and only if $Kerf=\lbrace 1_{G}\rbrace$ and $Imf=H$ by {\rm (1)} and {\rm (2)}.
\end{proof}

\begin{example}
	Let $H$ be a Hom-normal subgroup of a Hom-group $G$. The map $\pi:G\longrightarrow G/H$ is an epimorphism of Hom-groups, called canonical homomorphism, and $Ker\pi=H$.
\end{example}

\begin{proof}
	Since $\pi(xy)=(xy)H=(xH)(yH)=\pi(x)\pi(y)$ and $\pi(\alpha(x))=\alpha(x)H=\tilde{\alpha}(xH)=\tilde{\alpha}(\pi(x))$, we have $\pi$ is a homomorphism of Hom-groups. Obviously, $\pi$ is an epimorphism.
	
	By reference \cite[Lemma 3.2]{Mohammad Hassanzadeh}, for each $x\in H$, we have $\pi(x)=(x)H=H=1_{G/H}$, which implies $x\in Ker\pi$. Thus, $H\subseteq Ker\pi$. For each $x\in Ker\pi$, note that $\pi(x)=(x)H=1_{G/H}=H$, so we have $x\in H$. Thus, $Ker\pi\subseteq H$. Therefore, $Ker\pi=H$.
\end{proof}

\begin{theorem}(Fundamental Theorem of Homomorphisms of Hom-groups)
	Let $G$, $H$ be two Hom-groups. If $f:(G,\beta)\longrightarrow \left( H,\alpha\right) $ is an epimorphism, then $G/Kerf\cong H$.
\end{theorem}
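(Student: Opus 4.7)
The plan is to construct the canonical induced map $\bar f\colon G/\ker f \longrightarrow H$ by the formula $\bar f(g\,\ker f) = f(g)$ and verify that it is a bijective homomorphism of Hom-groups. Writing $N := \ker f$, Proposition \ref{eq2} guarantees that $N$ is a Hom-normal subgroup of $G$, so the results of Section \ref{sec3} endow $G/N$ with a Hom-group structure whose product is $(gN)(g'N) = (gg')N$ and whose twist is $\tilde\beta(gN) = \beta(g)N$. The target $H$ comes equipped with $\alpha$, and the hypothesis that $f$ is a Hom-group homomorphism is precisely $f(gg') = f(g)f(g')$ together with $\alpha \circ f = f \circ \beta$.

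The first step is to check that $\bar f$ is well-defined on cosets. If $gN = g'N$, then Proposition \ref{th-abcd}\,(3) yields $g^{-1}g' \in N$, so $f(g)^{-1}f(g') = f(g^{-1}g') = 1_H = f(g)^{-1}f(g)$; applying the Hom-group left-cancellation law established in Section \ref{sec2.5} gives $f(g) = f(g')$, as required.

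The second step is to verify the two homomorphism axioms. Multiplicativity follows from $\bar f\bigl((gN)(g'N)\bigr) = \bar f(gg'N) = f(gg') = f(g)f(g') = \bar f(gN)\bar f(g'N)$, and compatibility with the twist maps from $\alpha\bigl(\bar f(gN)\bigr) = \alpha(f(g)) = f(\beta(g)) = \bar f(\beta(g)N) = \bar f\bigl(\tilde\beta(gN)\bigr)$. For bijectivity, surjectivity is inherited from the hypothesis that $f$ is an epimorphism, and injectivity follows from computing $\ker \bar f = \{gN : f(g) = 1_H\} = \{gN : g \in N\} = \{N\} = \{1_{G/N}\}$ and invoking the monomorphism criterion (trivial kernel implies injective) proved earlier in Section \ref{sec4}.

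The main obstacle is the well-definedness step, since one cannot simply "multiply both sides by $f(g)$" as in classical group theory without invoking the Hom-associativity-aware cancellation law and the coset-equivalence characterization of Proposition \ref{th-abcd}; once that checkpoint is secured, the remainder of the proof is a direct transcription of the classical argument, relying only on the quotient Hom-group construction of Section \ref{sec3} and the fact that $f$ respects both the product and the twists.
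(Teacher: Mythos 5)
Your proposal is correct and follows essentially the same route as the paper: quotient by $\ker f$ via Proposition \ref{eq2}, define $\bar f(gN)=f(g)$, check well-definedness, the two homomorphism axioms, surjectivity from the epimorphism hypothesis, and injectivity via the trivial-kernel criterion. The only cosmetic difference is at the well-definedness step, where you invoke the previously established cancellation law to conclude $f(g)=f(g')$, while the paper re-derives the same cancellation in place by multiplying by $\alpha(f(x))$ and using Hom-associativity to compare $\alpha^{2}(f(x))$ with $\alpha^{2}(f(y))$.
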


\begin{proof}
	By proposition \ref{eq2}, we have $gKerf=Kerfg$, which implies that $G/Kerf$ is a Hom-group.
	
	Define $\overline{f}:G/Kerf\longrightarrow H$, where $xKerf \longmapsto f(x)$.
	
	First, we verify $\overline{f}$ is well-defined:
	\begin{eqnarray*}
		x Kerf=y Kerf &\Rightarrow & x^{-1} y \in Kerf\\
		&\Leftrightarrow & f(x^{-1}y)=1_{H}\\
		&\Rightarrow & f(x)^{-1} f(y)=1_{H}\\
		&\Rightarrow & \alpha\left(f(x)\right)\left(f(x)^{-1} f(y)\right)=\alpha(f(x)) 1=\alpha^{2} (f(x)).
	\end{eqnarray*}
    Since $\alpha\left(f(x)\right)\left(f(x)^{-1} f(y)\right)=\left(f(x) f(x)^{-1}\right) \alpha(f(y))=1_{H}\alpha(f(y))=\alpha^{2}(f(y))$, we have $\alpha^{2} (f(x))= \alpha^{2}(f(y))$. Therefore $f(x)=f(y)$, which implies that $\overline{f}(\overline{x})=\overline{f}(\overline{y})$.
	
	Moreover, we need to prove $\overline{f}$ is a homomorphism of Hom-groups:
	note that
	$$\overline{f}((xKerf)(yKerf))=\overline{f}(xyKerf)=f(x y)=f(x)f(y)=\overline{f}(xKerf)\cdot \overline{f}(yKerf),$$
	$$\overline{f}(\tilde{\beta}(xKerf))=\overline{f}(\beta(x)Kerf)=f(\beta(x))=\alpha(f(x))=\alpha(\overline{f}(xKerf)),$$ which implies that $\overline{f}\circ \tilde{\beta}=\alpha \circ \overline{f}$. Hence, $\overline{f}$ is a homomorphism of Hom-groups.
	
	Finally, we will prove $\overline{f}$ is a bijective map:
	
	(1) $\overline{f}$ is a surjection: since $f$ is an epimorphism, for any $ h\in H$, there exists $x\in G$ such that $f(x)=h$, then $\overline{f}(x Kerf)=f(x)=h$, hence $\overline{f}$ is a surjection.
	
	(2)$\overline{f}$ is an injection:
	we need to prove $Ker\overline{f}=\lbrace1_{G/Kerf}\rbrace$.

	Suppose that $x Kerf\in Ker \overline{f}$, then $\overline{f}(x Kerf)=1_{H}=f(x)$, that is $f(x)=1_{H}$, so $x \in Ker f$. Therefore, $ x Ker f=Ker f=Ker \overline{f}=\lbrace 1_{G/Ker f}\rbrace$.
\end{proof}

\begin{corollary}
	If  $f:(G,\beta)\longrightarrow \left( H,\alpha\right) $ is a homomorphism of Hom-groups, then $G/Kerf \cong Imf$.
\end{corollary}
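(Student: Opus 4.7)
The plan is to reduce this corollary to the Fundamental Theorem of Homomorphisms of Hom-groups by restricting the codomain. The idea is that $f$ is automatically surjective onto its image, so if we can legitimately view $f$ as an epimorphism of Hom-groups $G \twoheadrightarrow \mathrm{Im}\,f$, the theorem applies directly.

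First, I would invoke the earlier proposition that $\mathrm{Im}\,f$ is a Hom-subgroup of $(H,\alpha)$, so that $(\mathrm{Im}\,f,\alpha|_{\mathrm{Im}\,f})$ is itself a Hom-group (with the same unit $1_H$ and inverses as $H$, by Proposition \ref{eq1}). Next I would define $\tilde f : G \to \mathrm{Im}\,f$ by $\tilde f(g) = f(g)$. Checking that $\tilde f$ is a homomorphism of Hom-groups is immediate from the corresponding properties of $f$: for all $g,k \in G$, $\tilde f(gk) = f(g)f(k) = \tilde f(g)\tilde f(k)$, and $\alpha(\tilde f(g)) = \alpha(f(g)) = f(\beta(g)) = \tilde f(\beta(g))$. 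By construction $\tilde f$ is surjective, hence an epimorphism of Hom-groups.

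Then I would observe that $\mathrm{Ker}\,\tilde f = \mathrm{Ker}\,f$: indeed, $g \in \mathrm{Ker}\,\tilde f$ means $\tilde f(g) = 1_{\mathrm{Im}\,f}$, but by Proposition \ref{eq1} the unit of the Hom-subgroup $\mathrm{Im}\,f$ coincides with $1_H$, so this is the same condition as $f(g) = 1_H$. Applying the Fundamental Theorem of Homomorphisms of Hom-groups to the epimorphism $\tilde f : (G,\beta) \to (\mathrm{Im}\,f,\alpha)$ then yields the isomorphism $G/\mathrm{Ker}\,f = G/\mathrm{Ker}\,\tilde f \cong \mathrm{Im}\,f$, which is the desired conclusion.

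There is no real obstacle here; the only small subtlety worth stating explicitly is the identification of the units, which guarantees that $\mathrm{Ker}\,\tilde f$ does not accidentally shrink or enlarge when the codomain is restricted. Everything else is a direct appeal to previously established facts.
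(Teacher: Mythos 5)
Your argument is correct and is exactly the route the paper intends: the corollary is stated as an immediate consequence of the Fundamental Theorem (the paper gives no separate proof), obtained by corestricting $f$ to the Hom-subgroup $\mathrm{Im}\,f$ and noting the kernel is unchanged. Your explicit check that the units agree via Proposition \ref{eq1} is a careful spelling-out of the same deduction, not a different method.
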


Similar to groups, we have the first isomorphism theorem in Hom-groups. Now we provide a lemma before the theorem.

\begin{lemma}\label{410}
	Let $H$ be a Hom-subgroup of $G$ and let $N$ be a Hom-normal subgroup of $G$, then $NH=HN$ is a Hom-subgroup of $G$.
\end{lemma}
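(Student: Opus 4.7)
The plan is to first show that $NH$ and $HN$ coincide as subsets of $G$, and then to verify the four Hom-subgroup axioms for $NH$. For the set equality, I would invoke Hom-normality directly: since $hN=Nh$ for every $h\in H\subseteq G$, we get $NH=\bigcup_{h\in H}Nh=\bigcup_{h\in H}hN=HN$. In particular, for every $n\in N,\ h\in H$ there exists $n'\in N$ with $nh=hn'$, and symmetrically for the other direction. This swap is the only non-classical input I will use.

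With $NH=HN$ in hand, the easy axioms are quick: $1=1\cdot1\in NH$ shows nonemptyness; the $\alpha$-stability $\alpha(NH)\subseteq NH$ follows from $\alpha(nh)=\alpha(n)\alpha(h)$ together with $\alpha(N)=N$ and $\alpha(H)=H$ (both hold because $N,H$ are Hom-groups in their own right under $\alpha$); and closure under inverses is immediate from $(nh)^{-1}=h^{-1}n^{-1}\in HN=NH$ by the inverse-of-a-product formula recorded earlier.

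The real work is closure under the product. Given $n_1h_1,\,n_2h_2\in NH$, I plan to reshape $(n_1h_1)(n_2h_2)$ into the form $nh$ by repeated use of the Hom-associativity identity $(ab)\alpha(c)=\alpha(a)(bc)$, using the equality $HN=NH$ exactly once to move an $H$-factor past an $N$-factor. Concretely, writing $n_2h_2=\alpha(\alpha^{-1}(n_2)\alpha^{-1}(h_2))$ and reassociating converts the product into $\alpha(n_1)\bigl(h_1(\alpha^{-1}(n_2)\alpha^{-1}(h_2))\bigr)$; a second reassociation inside brings $\alpha^{-1}(h_1)$ next to $\alpha^{-1}(n_2)$, producing the inner piece $(\alpha^{-1}(h_1)\alpha^{-1}(n_2))h_2$. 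Now $\alpha^{-1}(h_1)\alpha^{-1}(n_2)\in HN=NH$, so it equals $n'h'$ for some $n'\in N,\ h'\in H$, and two further Hom-associativity shuffles collect the result into $(n_1\,\alpha(n'))(\alpha(h')\,h_2)$, which is manifestly in $NH$ since $N$ and $H$ are closed under their own products.

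The principal obstacle is the bookkeeping of $\alpha$-shifts: unlike the classical setting, each use of Hom-associativity moves an $\alpha$ onto some factor and may force an $\alpha^{-1}$ to be inserted elsewhere to keep the expressions equal. The saving feature is that $\alpha$ is bijective and stabilises both $N$ and $H$, so these shifts never leave the relevant subset; combined with the single normality-swap $HN=NH$, this turns the classical one-line argument $NH\cdot NH=N(HN)H=(NN)(HH)=NH$ into the Hom-version outlined above.
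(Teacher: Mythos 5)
Your proof is correct, and it takes a genuinely more direct route than the paper's on the set equality and a more explicit one on the subgroup test. For $NH=HN$ you use the paper's definition of Hom-normality verbatim ($hN=Nh$ for every $h\in H\subseteq G$), so the equality is a one-line union argument; the paper instead starts from the conjugation criterion $(hn)\alpha(h^{-1})\in N$, multiplies through by $\alpha^{2}(h)$, and extracts the element-wise relations $hn=\alpha^{-2}(n')h$ and $nh=h\alpha^{-2}(n'')$, which costs several lines of $\alpha$-bookkeeping for the same conclusion. For the subgroup property the paper uses the one-step criterion, computing $(nh)(n^{\prime}h^{\prime})^{-1}=\alpha(n)\left(\alpha^{-1}(hh^{\prime -1})n^{\prime -1}\right)$ and asserting membership in $NH$; note that this final assertion still tacitly needs the swap $HN=NH$ plus one more Hom-associativity shuffle, which is exactly the kind of step your version makes explicit. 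Your verification instead checks $1\in NH$, $\alpha(NH)=NH$, closure under inverses via $(nh)^{-1}=h^{-1}n^{-1}\in HN=NH$, and closure under products via $(n_1h_1)(n_2h_2)=\alpha(n_1)\bigl((\alpha^{-1}(h_1)\alpha^{-1}(n_2))h_2\bigr)=(n_1\alpha(n'))(\alpha(h')h_2)$, where $\alpha^{-1}(h_1)\alpha^{-1}(n_2)=n'h'\in NH$; I checked these manipulations and each is an instance of $(ab)\alpha(c)=\alpha(a)(bc)$ together with $\alpha^{-1}(gh)=\alpha^{-1}(g)\alpha^{-1}(h)$, so the argument goes through. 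What the paper's approach buys is brevity (one displayed computation plus the $gh^{-1}$ criterion it uses throughout without stating it for Hom-groups); what yours buys is independence from that unstated criterion and membership claims that land literally in $NH$, which matches the paper's definition of Hom-subgroup (including that $\alpha$ restricts to a bijection of the subset) more completely.
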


\begin{proof}
	For all $n \in N, h \in H$, we have $(hn) \alpha\left(h^{-1}\right) \in N$ by $N\triangleleft G$. There exists $n^{\prime}\in N$ such that
	\begin{gather*}
	 (h n) \cdot \alpha\left(h^{-1}\right)=n^{\prime},\\
		 \left((h n) \alpha\left(h^{-1}\right) \right) \alpha^{2}(h)=n^{\prime}\alpha^{2}(h),\\
		 \alpha(h n)\alpha\left(h^{-1} h\right)=n^{\prime} \alpha^{2}(h),\\
		 \alpha(h n) 1_{H}=n^{\prime}  \alpha^{2}(h),\\
		 \alpha^{2}(h n)=n^{\prime} \alpha^{2}(h),
	\end{gather*}
	Then we have $h n=\alpha^{-2}\left(n^{\prime}\right) h \in N H$, which implies that $NH\subseteq HN$.

	On the other hand, for all $ n \in N, h \in H$, we have $h^{-1}\in H$. Note that $\left(h^{-1} n\right)\alpha(h)=\left(h^{-1} n\right) \alpha( \left(h^{-1}\right)^{-1} ) \in N$ by $ N \triangleleft G$, then there exists $n^{\prime \prime}$ such that
	\begin{gather*}
	     \left(h^{-1} n\right) \alpha(h)=n^{\prime \prime},\\
		 \alpha\left(h^{-1}\right)(n h)=n^{\prime \prime},\\
		 \alpha^{2}(h)\left(\alpha\left(h^{-1}\right)(n h)\right)=\alpha^{2}(h)n^{\prime \prime},\\
		 \alpha\left(h h^{-1}\right) \alpha(n h)=\alpha^{2}(h) n^{\prime\prime},\\
		 \alpha^{2}(n h)=\alpha^{2}(h)  n^{\prime \prime}.
	\end{gather*}
	Then we have $n h=h \alpha^{-2}\left(n^{\prime \prime}\right) \in HN$, which implies that$N H\subseteq H N$. Therefore, $N H=H N$.
	
	In the last, we show that $NH\preceq G$. For any $n h, n^{\prime} h^{\prime} \in N H$, we have
	\begin{eqnarray*}
		(n h)\left(n^{\prime} h^{\prime}\right)^{-1}&=&(n h)\left(h^{\prime -1} n^{\prime -1}\right)\\&=&\alpha(n)\left(h  \alpha^{-1}\left(h^{\prime -1} n^{\prime -1}\right)\right)\\
		&=&\alpha(n)\left(\left(\alpha^{-1}(h) \alpha^{-1}\left(h^{\prime -1}\right)\right) \alpha \alpha^{-1}\left(n^{\prime-1}\right) \right)\\
		&=&\alpha(n)\left(\alpha^{-1}\left(h h^{\prime -1}\right) n^{\prime -1} \right)\in N H.
	\end{eqnarray*}
	
    \noindent So the lemma holds.
\end{proof}

\begin{theorem}(First Isomorphism Theorem)
	Let $H$ and $N$ be Hom-subgroups of a Hom-group $G$ with $N$ normal in $G$. Then
	
	{\rm (1)} $N\cap H$ is a Hom-normal subgroup of $H$.
	
	{\rm (2)} $N$ is a Hom-normal subgroup of $NH$.
	
	{\rm (3)} $H/(N\cap H)\cong NH/N$.
\end{theorem}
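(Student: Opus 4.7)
The plan is to prove (1) and (2) directly from the normality criterion $(gx)\alpha(g^{-1}) \in N$ (equivalent to $gN = Ng$ by the preceding section), and then to deduce (3) from the Fundamental Theorem of Homomorphisms of Hom-groups applied to a natural map $\phi : H \to NH/N$.

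For (1), $N \cap H$ is a Hom-subgroup of $G$ by the intersection theorem in Section \ref{sec2.5}, and hence a Hom-subgroup of $H$. To obtain normality in $H$, I would fix $h \in H$ and $x \in N \cap H$ and verify that $(hx)\alpha(h^{-1})$ lies in both $N$ (because $N \triangleleft G$ and $x \in N$) and $H$ (because $H$ is closed under products and under $\alpha$, and $\alpha(h^{-1}) = (\alpha(h))^{-1} \in H$). For (2), Lemma \ref{410} already gives that $NH = HN$ is a Hom-subgroup of $G$; since $\alpha$ restricts to a bijection on $N$, every $n \in N$ equals $\alpha^{-1}(n)\cdot 1_G \in NH$ with $\alpha^{-1}(n) \in N$, so $N$ is a Hom-subgroup of $NH$. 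Normality of $N$ in $NH$ is then inherited from $N \triangleleft G$, since the criterion $(gx)\alpha(g^{-1}) \in N$ already holds for every $g \in G$, and in particular for every $g \in NH$.

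For (3), I would define $\phi : H \to NH/N$ by $\phi(h) = hN$ and apply the corollary $H/\ker \phi \cong \mathrm{Im}\,\phi$ of the Fundamental Theorem. The homomorphism axioms $\phi(h_1 h_2) = (h_1 N)(h_2 N)$ and $\phi(\alpha(h)) = \widetilde{\alpha}(\phi(h))$ are immediate from the Hom-quotient construction. Surjectivity amounts to showing that every coset $(nh)N$ with $nh \in NH$ equals $\phi(h')$ for some $h' \in H$: using $NH = HN$, rewrite $nh = h'n'$ with $h' \in H$ and $n' \in N$, and then use Proposition \ref{th-abcd} to reduce the desired coset equality $(nh)N = h'N$ to checking $(nh)^{-1} h' \in N$. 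For the kernel, Proposition \ref{th-abcd} gives $hN = 1_G N$ iff $h^{-1} \cdot 1_G = \alpha(h^{-1}) \in N$, which is equivalent to $h \in N$ because $\alpha(N) = N$; combined with $h \in H$, this yields $\ker \phi = N \cap H$.

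The main obstacle is the surjectivity step. Swapping the factorisation $nh = h'n'$ between $NH$ and $HN$ and then pushing the resulting coset equality through Proposition \ref{th-abcd} both lean on Hom-associativity and on the $\alpha$-twisted behaviour of inverses (in particular on identities like $\alpha(gh)^{-1} = \alpha(h^{-1})\alpha(g^{-1})$ and $(\alpha^{-1}(g)h)k = g(h\alpha^{-1}(k))$ from the previous section) rather than on plain associativity. Once that is handled, the Fundamental Theorem produces $H/(N \cap H) \cong NH/N$ immediately.
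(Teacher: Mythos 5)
Your items (1) and (2) are correct; in fact your argument for (2) --- deducing normality of $N$ in $NH$ from the fact that the criterion $(gx)\alpha(g^{-1})\in N$ holds for every $g\in G$, hence in particular for every $g\in NH$ --- is shorter than the paper's explicit expansion of $\left((nh)x\right)\alpha\left((nh)^{-1}\right)$, and your kernel computation in (3) is also fine. The genuine gap is in your surjectivity step. After rewriting $nh=h'n'$ with $h'\in H$, $n'\in N$, you claim the coset identity to verify is $(nh)N=h'N$, i.e. $(nh)^{-1}h'\in N$. This is false in general: Hom-cosets do not satisfy $aN=\alpha(a)N$, and already in the special case $n'=1_G$ your claim reads $\alpha(h')N=h'N$, which by Proposition \ref{th-abcd} requires $\alpha(h'^{-1})h'\in N$. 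That condition can fail: take the Hom-group obtained from the abelian group $\mathbb{Z}^2$ with $\alpha(x,y)=(y,x)$ and product $a\ast b=\alpha(a+b)$, let $N$ be the diagonal (a Hom-normal subgroup preserved by $\alpha$), $H=G$, $h'=(1,0)$; then $\alpha(h'^{-1})\ast h'=(-1,1)\notin N$, so $(h'\ast 1_G)N=\alpha(h')N\neq h'N$. Structurally, Hom-associativity gives $(h'n')^{-1}h'=\left(n'^{-1}h'^{-1}\right)h'=\alpha(n'^{-1})\left(h'^{-1}\alpha^{-1}(h')\right)$, and the factor $h'^{-1}\alpha^{-1}(h')$ is not controlled by normality of $N$.

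The repair is small and is exactly what the paper does: the correct representative in $H$ is $\alpha(h')$, not $h'$. Indeed $(nh)^{-1}\alpha(h')=\left(n'^{-1}h'^{-1}\right)\alpha(h')=\alpha(n'^{-1})\left(h'^{-1}h'\right)=\alpha(n'^{-1})1=\alpha^{2}(n'^{-1})\in N$, so $(nh)N=\alpha(h')N\in\left\{hN\mid h\in H\right\}$ by Proposition \ref{th-abcd}; equivalently the paper expands the coset directly, $(h'n')N=\left\{(h'n')\alpha(n)\mid n\in N\right\}=\left\{\alpha(h')(n'n)\mid n\in N\right\}=\alpha(h')N$. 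With this correction your proof coincides with the paper's: the same map $h\mapsto hN$, the same identification of the kernel with $N\cap H$, and the Fundamental Theorem (or its corollary) then yields $H/(N\cap H)\cong NH/N$.
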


\begin{proof}
	{\rm (1)} $\forall h \in H,x \in N\cap H$, we only need to prove $( h x ) \alpha ( h^{-1} ) \in N \cap H$.
	
	Due to $x,h,\alpha^{-1}(h)\in H$, we have $(h x) \alpha\left(h^{-1}\right) \in H$. Note that $N$ is a Hom-normal subgroup of $G$, we have $(h x) \alpha\left(h^{-1}\right) \in N$. Hence, $(h x) \alpha\left(h^{-1}\right) \in N\cap H$.
	
	{\rm (2)} By lemma \ref{410}, $NH$ is a Hom-subgroup of $G$, now we prove that $N$ is a Hom-normal subgroup of $NH$. For all $n h \in N H,x \in N$, we have
	\begin{eqnarray*}
		\left(\left(n h\right) x\right)\alpha\left(\left( n h\right) ^{-1}\right)
		&=&\left(\alpha(n)\alpha(h)\right)\left(x\left(h^{-1} n^{-1}\right)\right)\\
		&=&\alpha^{2}(n)\left(\alpha(h) \alpha^{-1}\left(x\left(h^{-1} n^{-1}\right)\right)\right)\\
		&=&\alpha^{2}(n) \left(\alpha(h)\left(\alpha^{-1}(x)  \alpha^{-1}\left(h^{-1} n^{-1}\right)\right)\right)\\
		&=&\alpha^{2}(n)\left(\left(h \alpha^{-1}(x)\right)\left(h^{-1} n^{-1}\right)\right)\\
		&=&\alpha^{2}(n)\left(\left(\alpha^{-1}\left(h \alpha^{-1}(x)\right) h^{-1}\right) \alpha\left(n^{-1}\right)\right)\\
		&=&\alpha^{2}(n)\left(\left(\left(\alpha^{-1}(h) \alpha^{-2}(x)\right) h^{-1}\right) \alpha\left(n^{-1}\right)\right)\in N.
	\end{eqnarray*}
	
	\noindent Therefore, $N$ is a Hom-normal subgroup of $NH$.
	
	{\rm (3)} Define: $f : H \longrightarrow N H / N\subseteq G / N$, $f\left(h\right)=hN$.
	We need to prove that $f$ is an epimorphism and $Kerf=N \cap H$.
	
	First, $f$ is a homomorphism of Hom-groups: $\forall h_{1}, h_{2}$, we have $f(h_{1}h_{2})=\left(h_{1} h_{2}\right) N=\left(h_{1} N\right)\left(h_{2} N\right)=f\left(h_{1}\right) f\left(h_{2}\right)$ and
	$f(\alpha(h))=\alpha(h) N=\widetilde{\alpha}(h N)=\widetilde{\alpha}(f(h))$. Thus, $f$ is a homomorphism of Hom-groups.
	
	In addition, $f$ is a surjection: we only need to prove $NH/N = \lbrace hN|h\in H\rbrace $. $\forall \left(nh\right)N\in NH/N$, we have
	$$(nh) N=\left(h^{\prime} n^{\prime}\right) N=\left\{\left(h^{\prime} n^{\prime}\right)\alpha(n) | n \in N\right\}=\left\{\alpha\left(h^{\prime}\right)(n^{\prime}n) | n \in N\right\}=\left\{\alpha\left(h^{\prime}\right) n | n \in N\right\}\in\{h N | h \in H\}.$$
	$\forall h N\in HN$, we have $h N=\{h n | n \in N\}=\left\{\left(1_{G}\alpha^{-1}(h)\right)n | n \in N\right\}=\left(1_{G}\alpha^{-1}(h)\right)N \in NH/N$. Hence, $f$ is a surjection.
	
	In the last, we show that $kerf=N \cap H$. Assume that $h\in H$, we have $h\in Kerf \Leftrightarrow f\left(h\right)=1\Leftrightarrow hN=N\Leftrightarrow h\in N\Leftrightarrow h\in N\cap H$, that is $Kerf=N\cap H$.
	
	According to the fundamental theorem of homomorphisms of Hom-groups, $H/(N\cap H)\cong NH/N$.
\end{proof}

\begin{theorem}(Second Isomorphism Theorem)
	Let $\left(M,\al\right),\left(N,\al\right)$ be two Hom-normal subgroups of $\left(G,\al\right)$ and let $\left(N,\al\right)$ be a Hom-subgroup of $\left(M,\al\right)$. Then $\left(M/N,\widetilde\al\right)$ is a Hom-normal subgroup of $\left(G/N,\widetilde\al\right)$ and $\left(G/N\right)/\left(M/N\right)\cong G/M$.
\end{theorem}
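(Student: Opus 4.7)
The plan is to follow the classical group-theoretic template: first establish that $M/N$ sits inside $G/N$ as a Hom-normal subgroup, then define the natural quotient map $\phi:G/N\to G/M$ by $\phi(gN)=gM$, and finally apply the Fundamental Theorem of Homomorphisms of Hom-groups to identify $(G/N)/(M/N)$ with $G/M$. Since $\alpha$ is invertible and $N\subseteq M$, the twisted identities $\widetilde{\alpha}$ on the two quotients match up compatibly under $\phi$, so only minor bookkeeping is needed beyond the analogous argument for groups.

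For the first conclusion, since $N\subseteq M$ and $N\triangleleft G$ implies $N\triangleleft M$, the Hom-quotient $(M/N,\widetilde{\alpha})$ is itself a Hom-group by the construction in Section~\ref{sec3}, and its underlying set $\{mN\mid m\in M\}$ is a nonempty subset of $G/N$ closed under $\widetilde{\mu}$ and $\widetilde{\alpha}$. Hence $M/N\preceq G/N$. To verify Hom-normality, I would use the characterization $(gH)\widetilde{\alpha}((gH)^{-1})\subseteq H$ applied with $H=M/N$: for any $gN\in G/N$ and $mN\in M/N$,
\[
((gN)(mN))\,\widetilde{\alpha}((gN)^{-1})=((gm)\alpha(g^{-1}))\,N,
\]
and because $M\triangleleft G$, the representative $(gm)\alpha(g^{-1})$ lies in $M$, so the coset lies in $M/N$. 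Thus $M/N\triangleleft G/N$.

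For the isomorphism, I define $\phi:G/N\to G/M$ by $\phi(gN)=gM$. Well-definedness is immediate from $N\subseteq M$: if $aN=bN$ then $a^{-1}b\in N\subseteq M$, so $aM=bM$ by Proposition~\ref{th-abcd}. The homomorphism properties read
\[
\phi((aN)(bN))=\phi((ab)N)=(ab)M=(aM)(bM)=\phi(aN)\phi(bN),
\]
\[
\phi(\widetilde{\alpha}(aN))=\phi(\alpha(a)N)=\alpha(a)M=\widetilde{\alpha}(aM)=\widetilde{\alpha}(\phi(aN)).
\]
Surjectivity is obvious since $gM=\phi(gN)$. Finally, $gN\in\ker\phi$ if and only if $gM=M$, i.e.\ $g\in M$, which gives $\ker\phi=M/N$. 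Applying the Fundamental Theorem of Homomorphisms of Hom-groups to $\phi$ then yields $(G/N)/(M/N)\cong G/M$.

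The main obstacle is not conceptual but clerical: one must consistently track the twist $\widetilde{\alpha}$ and remember that in the Hom-quotient the product is $\widetilde{\mu}(aN,bN)=\mu(a,b)N$ while $\widetilde{\alpha}(aN)=\alpha(a)N$. The normality check in particular requires recognizing that the element $(gm)\alpha(g^{-1})\in M$ (from $M\triangleleft G$) is exactly what appears when one expands $((gN)(mN))\widetilde{\alpha}((gN)^{-1})$ using these definitions, and the well-definedness of $\phi$ depends on the (easy but essential) transitivity of coset equivalence through the inclusion $N\subseteq M$. Once these identifications are firmly in hand, every remaining step reduces to a corresponding fact already proved for $G$, $M$, and $N$ in Sections~\ref{sec3} and~\ref{sec4}.
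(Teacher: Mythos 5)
Your proposal is correct and follows essentially the same route as the paper: you verify $M/N\triangleleft G/N$ by computing $((gN)(mN))\widetilde{\alpha}((gN)^{-1})=((gm)\alpha(g^{-1}))N\in M/N$ using $M\triangleleft G$, then define the natural epimorphism $G/N\to G/M$, identify its kernel with $M/N$, and invoke the Fundamental Theorem of Homomorphisms. The only difference is that you additionally spell out well-definedness of the map and the Hom-subgroup check for $M/N$, details the paper leaves implicit.
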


\begin{proof}
	$\forall m \in M,g \in G$, we have $\left((g N)(m N)\right) \widetilde{\alpha}\left(g^{-1} N\right)=\left((g N)(m N)\right) \left(\alpha\left(g^{-1}\right) N\right)=\left(\left(g m\right)\alpha\left(g^{-1}\right) \right) N \in M/N$, which implies that $M/N\triangleleft G/N$, so $(G/N)/(M/N)$ is a Hom-group.
	
	Define $f:G/N\longrightarrow G/M$, $f\left(gN\right)=gM$.
	
	(1) It is obvious that $f$ is a surjection.
	
    (2) $f$ is an epimorphism:
	$\forall (g_{1} N)( g_{2} N)$,$$f\left((g_{1} N )( g_{2} N)\right)=f(g_{1} g_{2} N)=\left(g_{1} g_{2}\right) M=(g_{1} M)(g_{2} M)=f(g_{1} N) f(g_{2} N);$$
	$\forall g N\in G/N$,$$f(\widetilde{\alpha}(g N))=f\left(\alpha\left(g\right)N\right)=\alpha\left(g\right)M=\widetilde{\alpha}\left(gM\right)=\widetilde{\alpha}\left(f\left(gN\right)\right).$$
	Hence, $f$ is an epimorphism.
	
	(3) $\forall gN\in G/N, gN\in Kerf$, we have $f\left(gN\right)=gM=M$ if and only if $g\in M$, which implies that $Kerf=M/N$. Since $f$ is a homomorphism of Hom-groups, we have $\left(G/N\right)/\left(M/N\right)\cong G/M$.
\end{proof}

\begin{example}
	Let $\left(G,\alpha\right),\left(G^{\prime},\alpha^{\prime}\right)$ be two Hom-groups. Then
	$\left(G\times G^{\prime},\alpha \times \alpha^{\prime} \right)$ is a Hom-group, where the operation is defined by $\left(g,g^{\prime}\right)\left(h,h^{\prime}\right)=\left(gh,g^{\prime}h^{\prime}\right)$.
\end{example}

\begin{proof}
	Define a map: $\alpha \times \alpha^{\prime}:G\times G^{\prime}\longrightarrow G\times G^{\prime}$, $\alpha \times \alpha^{\prime}\left(g,h\right)=\left(\alpha\left(g\right),\alpha^{\prime}\left(h\right)\right)$, it is obvious that $\alpha \times \alpha^{\prime}$ is a bijective map.
	
	(1) $\alpha \times \alpha^{\prime}$ satisfies the Hom-associativity property: for all $g,h,k\in G, g^{\prime},h^{\prime},k^{\prime}\in G^{\prime}$, we have
	 \begin{align*}
	 	\alpha \times \alpha^{\prime}\left(g, g^{\prime}\right)\left(\left(h, h^{\prime}\right)\left(k, k^{\prime}\right)\right)&=\left(\alpha\left(g\right), \alpha^{\prime}\left(g^{\prime}\right)\right) \left(hk,h^{\prime}k^{\prime}\right)\\&=\left(\alpha \left(g\right)(h k), \alpha^{\prime}\left(g^{\prime}\right)\left(h^{\prime} k^{\prime}\right) \right)\\&=\left(\left(g h\right) \alpha\left(k\right),\left(g^{\prime} h^{\prime}\right)\alpha^{\prime} \left(k^{\prime}\right)\right)\\&=\left(g h, g^{\prime} h^{\prime}\right)\left(\alpha(k), \alpha^{\prime}\left(k^{\prime}\right)\right)\\&=\left(\left(g g^{\prime}\right)\left(h h^{\prime}\right)\right)\left(\alpha(k), \alpha^{\prime}\left(k^{\prime}\right)\right)\\&=\left(\left(g,g'\right)\left(h,h'\right)\right)\alpha \times \alpha^{\prime}\left(k,k'\right).
	 \end{align*}
	
	(2) $\alpha \times \alpha^{\prime}$ is multiplicative: for all $g,h\in G, g^{\prime},h^{\prime}\in G^{\prime}$, we have $\alpha \times \alpha^{\prime}\left(\left(g, h\right) \left(g^{\prime}, h^{\prime}\right)\right)=\alpha \times \alpha^{\prime}\left(\left(g g^{\prime}, h h^{\prime}\right) \right)=\left(\alpha \left(gg^{\prime}\right), \alpha^{\prime}\left(h h^{\prime}\right) \right)=\left(\alpha\left(g\right) \alpha \left(g^{\prime}\right),\alpha^{\prime}\left(h\right) \alpha^{\prime}\left(h^{\prime}\right)\right)=\left(\al\left(g\right),\al'\left(h\right)\right)\left(\al\left(g'\right),\al'\left(h'\right)\right)=\alpha \times \alpha^{\prime}\left(g,h\right)\alpha \times \alpha^{\prime}\left(g',h'\right)$.
	
	(3) Identity element $\left(1_{G},1_{G^{\prime}}\right)$: $\forall\left(g,g^{\prime}\right)\in \left(G,G^{\prime}\right)$,  $$\left(g,g^{\prime}\right)\left(1_{G},1_{G^{\prime}}\right)=\left(g1_{G},g^{\prime}1_{G^{\prime}}\right)=\left(\alpha\left(g\right),\alpha^{\prime}\left(g^{\prime}\right)\right)=\alpha \times \alpha^{\prime}\left(g,g^{\prime}\right)= \left(1_{G},1_{G^{\prime}}\right)\left(g,g^{\prime}\right),$$ and $\alpha \times \alpha^{\prime}\left(1_{G},1_{G^{\prime}}\right)=\left(\alpha\left(1_{G}\right),\alpha^{\prime}\left(1_{G^{\prime}}\right)\right)=\left(1_{G},1_{G^{\prime}}\right)$.
	
	(4) For all $\left(g, g^{\prime}\right)\in \left(G,G^{\prime}\right)$, there exists $\left(g^{-1}, g^{\prime -1}\right)\in \left(G,G^{\prime}\right)$ such that $\left(g, g^{\prime}\right)\left(g^{-1}, g^{\prime -1}\right)=\left(g g^{-1} , g^{\prime} g^{\prime -1}\right)=\left(1_{G},1_{G}\right)=\left( g^{-1}g,g^{\prime -1}g^{\prime}\right)=\left(g^{-1}, g^{\prime -1}\right)\left(g, g^{\prime}\right)$.
	
	In conclusion, $\left(G\times G^{\prime},\alpha \times \alpha^{\prime} \right)$ is a Hom-group.
\end{proof}

\begin{example}
	Let $\left(G,\alpha\right),\left(G^{\prime},\alpha^{\prime}\right)$ be two Hom-semigroups, then
	$\left(G\times G^{\prime},\alpha \times \alpha^{\prime} \right)$ is a Hom-semigroup, where the operation is defined by $\left(g,g^{\prime}\right)\left(h,h^{\prime}\right)=\left(gh,g^{\prime}h^{\prime}\right)$.
\end{example}

\begin{proposition}
	Let $\left(G,\alpha\right),\left(H,\beta \right)$ be two Hom-groups, then
	
	{\rm (1)} There exist two monomorphism: $i_{G}:G\longrightarrow G\times H,g\longmapsto \left(g,1_{H}\right)$ and $i_{H}:H\longrightarrow G\times H,h\longmapsto \left(1_{G},h\right)$ such that $G\cong i_{G}\left(G\right)$ and $H\cong i_{H}\left(H\right)$;
	
	{\rm (2)} $G\times H=i_{G}\left(G\right)i_{H}\left(H\right)$;
	
	{\rm (3)} $i_{G}\left(G\right)\cap i_{H}\left(H\right)=\lbrace\left(1_{G},1_{H}\right)\rbrace$;
	
	{\rm (4)} $\forall x\in i_{G}\left(G\right),\forall y\in i_{H}\left(H\right)$, we have $xy=yx$.
\end{proposition}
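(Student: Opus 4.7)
The plan is to verify the four claims in order, using only the componentwise product on $G\times H$ together with the bijectivity of $\alpha$ and $\beta$. Throughout, the twisting map on $G\times H$ is $\alpha\times \beta$, and I will freely use $\alpha(1_G)=1_G$ and $\beta(1_H)=1_H$ from the earlier propositions.

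For (1), I would first check that $i_G$ is a homomorphism: $i_G(g_1)i_G(g_2)=(g_1,1_H)(g_2,1_H)=(g_1g_2,\,1_H\cdot 1_H)=(g_1g_2,\beta(1_H))=(g_1g_2,1_H)=i_G(g_1g_2)$, and compatibility with the twisting, $(\alpha\times\beta)\circ i_G=i_G\circ\alpha$, is just $(\alpha(g),\beta(1_H))=(\alpha(g),1_H)$. Injectivity is immediate by projecting to the first coordinate, so $i_G$ is a monomorphism, and restricting the codomain to its image yields the isomorphism $G\cong i_G(G)$. The argument for $i_H$ is completely symmetric.

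The only step that really needs attention is (2): a naive attempt gives $(g,1_H)(1_G,h)=(g\cdot 1_G,\,1_H\cdot h)=(\alpha(g),\beta(h))$ rather than $(g,h)$, because every product in a Hom-group is twisted. To land on an arbitrary $(g,h)$, I invoke the invertibility of $\alpha$ and $\beta$ and write
\[
(g,h)=\bigl(\alpha(\alpha^{-1}(g)),\,\beta(\beta^{-1}(h))\bigr)=i_G(\alpha^{-1}(g))\cdot i_H(\beta^{-1}(h)),
\]
which proves $G\times H\subseteq i_G(G)\cdot i_H(H)$; the reverse inclusion is automatic from closure of the product in $G\times H$.

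Finally, (3) is immediate: an element of $i_G(G)\cap i_H(H)$ must simultaneously have the forms $(g,1_H)$ and $(1_G,h)$, forcing $g=1_G$ and $h=1_H$. For (4), taking $x=(g,1_H)\in i_G(G)$ and $y=(1_G,h)\in i_H(H)$, both $xy$ and $yx$ compute componentwise to $(\alpha(g),\beta(h))$, so they coincide. The main (and essentially only) obstacle is the twisting correction in (2); everything else is a direct componentwise verification that mirrors the classical group-theoretic argument.
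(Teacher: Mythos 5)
Your proof is correct, and on the two nontrivial items it takes a cleaner route than the paper. For (1) the paper argues via the kernel ($\mathrm{Ker}\,i_G=\{1_G\}$) and then invokes the fundamental theorem of homomorphisms to get $G\cong G/\mathrm{Ker}\,i_G\cong i_G(G)$, whereas you check injectivity directly and corestrict to the image; these are interchangeable, since the image of a homomorphism is already known to be a Hom-subgroup. The real divergence is in (2): the paper only remarks that $i_G(G)\cdot i_H(H)$ is a Hom-subgroup of $G\times H$ and never actually exhibits an arbitrary element of $G\times H$ as such a product, so the stated equality is left essentially unproved there. Your observation that $(g,1_H)(1_G,h)=(\alpha(g),\beta(h))$, together with the explicit decomposition $(g,h)=i_G\left(\alpha^{-1}(g)\right)\,i_H\left(\beta^{-1}(h)\right)$, is exactly the twisting correction needed, and it closes that gap using the standing assumption that $\alpha$ and $\beta$ are bijective. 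For (4) the paper first proves an auxiliary criterion ($xy=yx$ if and only if $\alpha^{-1}(xy)x^{-1}=\alpha(y)$) and then applies it inside $G\times H$; your direct componentwise computation, where both $xy$ and $yx$ reduce to $(\alpha(g),\beta(h))$ by Hom-unitality, gives the same conclusion with less machinery. Part (3) is identical in both treatments. In short, your argument buys a complete proof of (2) and a shorter proof of (4), at no loss of generality.
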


\begin{proof}
	{\rm (1)} $\forall a, b \in G$, we have $$i_{G}(a b)=(a b, 1_{H})=(a, 1_{H})(b, 1_{H})=i_{G}(a)i_{G}(b),$$
	$$i_{G}(\alpha(a))=\left(\alpha\left(a\right),1_{H}\right)=\left(\alpha\left(a\right),\beta\left(1_{H}\right)\right)=\alpha \times \beta\left(a,1_{H}\right)=\alpha \times \beta\left(i_{G}\left(a\right)\right).$$
	So $i_{G}$ is a homomorphism of Hom-groups. Similarly, $i_{H}$ is a homomorphism of Hom-groups.
	
In addition, since $a\in Ker\left(i_{G}\right)\Leftrightarrow i_{G}\left(a\right)=1_{G\times H}\Leftrightarrow \left(a,1_{H}\right)=\left(1_{G},1_{H}\right)\Leftrightarrow a=1_{G}\Rightarrow Ker(i_{G})=\left\{1_{G}\right\}$, $i_{G}$ is a monomorphism. Similarly, $i_{H}$ is a monomorphism.
	
Moreover, since $i_{G}\left(1_{G}\right)=1_{G\times H}$, we can get $G=G/Ker(i_{G})\cong i_{G}\left(G\right)=\lbrace i_{G}\left(g\right),g\in G\rbrace$. Similarly, $H\cong i_{H}\left(H\right)$.
	
	{\rm (2)} Since $i_{G}(G)$ and $i_{H}(H)$ are Hom-subgroups of $G\times H$, we have $i_{G}(G)\cdot i_{H}(H)$ is a Hom-subgroup of $G\times H$.
	
	{\rm (3)} For all $(g,h)\in i_{G}(G)\cap i_{H}(H)$, we have  $(g,h)\in i_{G}(G)$ and $(g,h)\in i_{H}(H)$, which implies that $ h=1_{H}$ and $g=1_{G}$. Thus, $(g,h)=(1_{G},1_{H})=1_{G\times H}$.
	
	{\rm (4)} Before that, we need to prove the following conclusion first: for all Hom-groups $G$, $xy=yx$ if and only if $\forall x,y\in G, \alpha^{-1}(xy)x^{-1}=\alpha\left(y\right)$.
	
    Assume that $\alpha^{-1}(xy)x^{-1}=\alpha\left(y\right)$, then \begin{gather*}
    	\left(\alpha^{-1}(xy)x^{-1}\right)\alpha\left(x\right) =\alpha\left(y\right)\alpha\left(x\right),\\
    	 (xy)(x^{-1}x)=\alpha(yx),\\\alpha(xy)=\alpha(yx),\\xy=yx.
    \end{gather*}
	
    Suppose that $xy=yx$, then $\alpha^{-1}(xy)x^{-1}=\alpha^{-1}(yx)x^{-1}=y\alpha^{-1}(xx^{-1})=\alpha(y)$.

    So $xy=yx$ if and only if $\alpha^{-1}(xy)x^{-1}=\alpha\left(y\right)$.
	
	Now we prove {\rm (4)},
	$\forall (g,1_{H})\in i_{G}(G), \forall (1_{G}, h)\in i_{H}(H)$, we have $(\alpha \times \beta)^{-1}(g,1_{H})(1_{G}, h)(g,1_{H})^{-1}=\alpha^{-1}\times \beta^{-1}(\alpha(g),\beta(h))(g^{-1},1_{H})=(g,h) (g^{-1},1_{H})=(\alpha,1_{H})=(1_{G},\beta\left(h\right))=\alpha \times \beta(1_{G},h). $
\end{proof}

\begin{proposition}
	Let $H$, $K$ be Hom-subgroups of $G$. Suppose that $H,K$ satisfy the following three conditions:
	
	{\rm (1)} $G=HK$;
	
	{\rm (2)} $H\cap K=\lbrace1\rbrace$;
	
	{\rm (3)} $\forall h\in H,k\in K$, we have $hk=kh$,
	
\noindent	then $G\cong H\times K$.
\end{proposition}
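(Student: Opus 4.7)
The plan is to define $\varphi: H \times K \longrightarrow G$ by $\varphi(h,k) = hk$ and show that $\varphi$ is an isomorphism of Hom-groups (with respect to the product Hom-group structure $\left(H\times K,\al\times\al\right)$ constructed in the previous example). Surjectivity is immediate from condition (1), since $G=HK$ means every $g\in G$ admits a factorization $g=hk$, i.e. $g=\varphi(h,k)$. The compatibility with the Hom-maps, $\varphi\circ\left(\al\times\al\right)=\al\circ\varphi$, follows at once from multiplicativity of $\al$.

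The main content is verifying multiplicativity of $\varphi$, which reduces to the identity $(h_1 k_1)(h_2 k_2) = (h_1 h_2)(k_1 k_2)$. To prove this I would first apply the Hom-associativity identity $(AB)C=\al(A)\left(B\al^{-1}(C)\right)$ (an easy consequence of the axiom $\al(A)(BC)=(AB)\al(C)$ together with invertibility of $\al$) to obtain
\[
(h_1 k_1)(h_2 k_2) = \al(h_1)\bigl(k_1\,\al^{-1}(h_2 k_2)\bigr) = \al(h_1)\bigl(k_1\,(\al^{-1}(h_2)\al^{-1}(k_2))\bigr),
\]
using multiplicativity of $\al^{-1}$ proved in Section \ref{sec2.5}. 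Next I would rewrite the inner factor via the dual identity $A(BC)=\left(\al^{-1}(A)B\right)\al(C)$ as $\left(\al^{-1}(k_1)\al^{-1}(h_2)\right)k_2$. At this point condition (3), together with the fact that $H$ and $K$ are closed under $\al^{-1}$, allows me to commute $\al^{-1}(k_1)\in K$ past $\al^{-1}(h_2)\in H$. Two further applications of Hom-associativity then collapse the expression back to $(h_1 h_2)(k_1 k_2)$, yielding the desired equality.

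For injectivity, suppose $h_1 k_1 = h_2 k_2$. Left-multiplying by $\al(h_2^{-1})$ and invoking Hom-associativity turns this into $\left(h_2^{-1} h_1\right)\al(k_1)=\al^2(k_2)$; right-multiplying by $\al^2(k_2^{-1})$ and simplifying via multiplicativity of $\al$ gives $\left(h_2^{-1} h_1\right)\left(k_1 k_2^{-1}\right)=1$. Setting $u=h_2^{-1} h_1\in H$ and $v=k_1 k_2^{-1}\in K$, the relation $uv=1$ combined with Hom-associativity forces $\al^2(u)=\al(v^{-1})$, an element lying simultaneously in $H$ and $K$. Condition (2) then gives $\al^2(u)=1$, hence $u=1$, and substituting back yields $v=1$, so that $h_1=h_2$ and $k_1=k_2$.

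The step I expect to be the main obstacle is the multiplicativity calculation $(h_1 k_1)(h_2 k_2)=(h_1 h_2)(k_1 k_2)$: in an ordinary group this is a one-line consequence of associativity and the single swap $k_1 h_2 = h_2 k_1$, but in the Hom-setting every re-bracketing inserts an $\al$ or $\al^{-1}$, and the main care is in arranging the manipulations so that all $\al$-corrections cancel precisely after the one commutativity step is used.
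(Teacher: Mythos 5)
Your proposal is correct and follows essentially the same route as the paper: the map $(h,k)\mapsto hk$, multiplicativity via the Hom-associativity identities $(AB)C=\al(A)\bigl(B\al^{-1}(C)\bigr)$ and $A(BC)=\bigl(\al^{-1}(A)B\bigr)\al(C)$ together with one use of condition (3), surjectivity from $G=HK$, and injectivity from $H\cap K=\{1\}$ (the paper phrases injectivity through the trivial-kernel criterion, while you argue it directly, and your intermediate identity should read $\al^{2}(u)=\al^{2}(v^{-1})$ rather than $\al^{2}(u)=\al(v^{-1})$, a slip that does not affect the conclusion).
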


\begin{proof}
	Define $f:H\times K\longrightarrow G,(h,k)\longmapsto hk$. We claim that $f$ is an isomorphism.
	
	$\forall \left(h,k\right),\left(h^{\prime},k^{\prime}\right)\in H\times K,$ we can get
	\begin{eqnarray*}
		f\left(\left(h,k\right)\left(h^{\prime},k^{\prime}\right)\right)&=& f\left(hh^{\prime},kk^{\prime}\right)\\
		&=& \left(hh^{\prime}\right)\left(kk^{\prime}\right)\\
		&=& \alpha^{-1}\left(\left(h h^{\prime}\right) k\right) \alpha(k)\\
		&=& \left(h\left(\alpha^{-1}\left(h^{\prime} k\right)\right)\right) \alpha\left(k^{\prime}\right)\\
		&=& \left(\alpha^{-1}(h k) h^{\prime}\right) \alpha\left(k^{\prime}\right)\\
		&=&\left(h k\right)\left(h^{\prime} k\right)=f\left(h, k\right) f\left(h^{\prime}, k^{\prime} \right).
	\end{eqnarray*}
	$\forall \left(h,k\right)\in H\times K$, we see that $f(\alpha \times \alpha(h, k))=f(\alpha(h), \alpha(k))=\alpha(h) \alpha(k)=\alpha(h k)=\alpha(f(h, k))$.
	Hence, $f$ is a homomorphism.
	
	It is obvious that $f$ is a surjection by $G=HK$.
	
	$\forall \left(h,k\right)\in kerf$, we have
	\begin{gather*}
		f\left( h,k\right)=hk=1_{G},\\ h=k^{-1}\in H\cap K=\left\lbrace 1_{G}\right\rbrace,\\ h=1_{G}=k,
	\end{gather*}
	then $f$ is an injection. Thus, $G\cong H\times K$.
\end{proof}

\section{Hom-group actions}\label{sec5}
In this section, we introduce a notion of Hom-group action and study some properties of Hom-group actions. To get nice results on Hom-group actions, we need to consider only involutive Hom-groups.

\begin{definition}
Let $(G,\alpha)$ be a Hom-group and $(X,\beta)$ be any non-empty Hom-set. An action of a Hom-group $G$ on $X$ is a map
$$
\theta:~ G\times X\to X,~~~~~~ \theta(g, x)=gx,
$$
satisfying
\begin{enumerate}
\item $1x=\beta(x),~\text{for all} ~x\in X$ and $1$ is the Hom-identity of $G$;
\item $(g_1 g_2)x=\alpha(g_1)(\beta^{-1}(\alpha(g_2)x)),~~~\text{for all}~~~ g_1,g_2\in G,~x\in X.$
\end{enumerate}
\end{definition}
\begin{example}
Let $(G,\alpha)$ be a Hom-group and $(H, \alpha)$ be a Hom-subgroup of $G$. Then there is a regular left $H$ action on $G$ defined by $\theta(h,g)=hg$.
\end{example}

The following is an equivalent definition of Hom-group action.
\begin{proposition}
A Hom-group $(G, \alpha)$ is acting on a Hom-set $(X,\beta)$ if and only if there exist a weak Hom-group homomorphism
$$\psi: G\to \bf{Sym(X)}.$$
\end{proposition}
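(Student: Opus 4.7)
The plan is to establish a natural correspondence between actions $\theta:G\times X\to X$ and weak homomorphisms $\psi:G\to\bf{Sym(X)}$ via the formula $\psi(g)(x):=\theta(g,x)=gx$. Throughout the argument I rely on the structure of $\bf{Sym(X)}$ recorded in the earlier example: its Hom-identity is the map $\beta$ itself, its Hom-map is $\alpha_{\beta}(\sigma)=\beta\circ\sigma\circ\beta^{-1}$, and its multiplication is $\mu(\sigma_1,\sigma_2)=\beta\circ\sigma_1\circ\beta^{-1}\circ\sigma_2\circ\beta^{-1}$.

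For the direction \emph{action $\Rightarrow$ weak homomorphism}, suppose $\theta$ is an action and define $\psi(g):X\to X$ by $\psi(g)(x)=gx$. The identity axiom $1\cdot x=\beta(x)$ gives $\psi(1_G)=\beta$, which is the Hom-identity of $\bf{Sym(X)}$. The action axiom directly says $\psi(g_1g_2)=\psi(\alpha(g_1))\circ\beta^{-1}\circ\psi(\alpha(g_2))$; pre-composing with $\beta$ and post-composing with $\beta^{-1}$ turns this into $\alpha_{\beta}(\psi(g_1g_2))=\mu(\psi(\alpha(g_1)),\psi(\alpha(g_2)))$, which is the weak homomorphism compatibility. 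The remaining point is that each $\psi(g)$ actually lies in $\bf{Sym(X)}$. Taking $g_1=g$, $g_2=g^{-1}$ in the action axiom yields $\beta(x)=1\cdot x=\alpha(g)\bigl(\beta^{-1}(\alpha(g^{-1})x)\bigr)$, i.e.\ $\psi(\alpha(g))\circ\beta^{-1}\circ\psi(\alpha(g^{-1}))=\beta$, and swapping the roles of $g$ and $g^{-1}$ gives the companion identity. Since $\beta$ is bijective and $\alpha$ is bijective on $G$, the two equations together force every $\psi(g)$ to be a bijection of $X$.

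For the converse, given a weak homomorphism $\psi:G\to\bf{Sym(X)}$, put $\theta(g,x):=\psi(g)(x)$. Then $\psi(1_G)=\beta$ immediately forces $1\cdot x=\beta(x)$, the first axiom of an action. For the second axiom, unwrap the weak homomorphism condition $\alpha_{\beta}(\psi(g_1g_2))=\mu(\psi(\alpha(g_1)),\psi(\alpha(g_2)))$ into the functional equation $\beta\circ\psi(g_1g_2)\circ\beta^{-1}=\beta\circ\psi(\alpha(g_1))\circ\beta^{-1}\circ\psi(\alpha(g_2))\circ\beta^{-1}$; cancelling $\beta$ on the left and $\beta^{-1}$ on the right (both legal since $\beta$ is bijective) and evaluating at $x$ produces exactly $(g_1g_2)x=\alpha(g_1)(\beta^{-1}(\alpha(g_2)x))$.

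I expect the main obstacle to be the bijectivity verification in the forward direction, since in the Hom-setting $\psi(g)\circ\psi(g^{-1})$ need not equal the identity map and so $\psi(g^{-1})$ is not literally an inverse of $\psi(g)$; one must interpose $\beta^{-1}$ and pass to $\alpha(g^{-1})$, essentially reading the inverse off the action axiom itself. Once this is in hand the rest of the proposition reduces to a formal translation between the two axioms under the correspondence $\sigma\leftrightarrow\beta\circ\sigma\circ\beta^{-1}$ built into the very definition of the Hom-group $\bf{Sym(X)}$.
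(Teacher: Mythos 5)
Your proposal is correct and follows essentially the same route as the paper: the same correspondence $\psi(g)(x)=gx$, the same verification that the two action axioms translate into $\psi(1)=\beta$ and $\psi(g_1g_2)=\psi(\alpha(g_1))\circ\beta^{-1}\circ\psi(\alpha(g_2))$, and the same use of the pair $g,g^{-1}$ in the action axiom to get bijectivity of each $\psi(g)$. If anything, your handling of the bijectivity step (using both composite identities together with surjectivity of $\alpha$) is slightly more explicit than the paper's.
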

\begin{proof}
First assume that $G$ is acting on the Hom-set $(X,\beta)$.
For all $g\in G$, we define
\begin{align*}
\phi_g: X\to X,~~x\mapsto \theta(g,x)=gx.
\end{align*}
Note that $$\phi_1=\beta,~~~\text{and}~~~ \phi_{\alpha^{-1}(g g^{-1})}=\phi_g\circ\beta^{-1}\circ\phi_{g^{-1}}=\phi_1.$$
This implies $\phi_g\circ\beta^{-1}\in\bf{Sym(X)}$. Thus, we have $\phi_g\in \bf{Sym(X)}$. Now we define
\begin{align*}
\psi:~ G\to \bf{Sym(X)},~~~g\mapsto \phi_g.
\end{align*}
We show that $\psi$ is a weak Hom-group homomorphism. We have
\begin{align*}
&\psi(1)=\phi_1=\beta,\\
&\psi(g_1g_2)(x)\\
&=\phi_{g_1 g_2}(x)\\
&=(g_1 g_2)x=\alpha(g_1)(\beta^{-1}(\alpha(g_2)x))\\
&=\psi(\alpha(g_1))\circ\beta^{-1}\circ \psi(\alpha(g_2))(x)
\end{align*}
Thus, $\alpha_\beta\circ \psi(g_1 g_2)= \psi(\alpha(g_1)) \psi(\alpha(g_2))$.

Conversely, suppose $\psi: G\to \bf{Sym(X)}$ is a weak Hom-group homomorphism. We show there is an action of $G$ on $X$. We define
\begin{align*}
\theta:~& G\times X\to X,\\
           &(g,x)\mapsto \theta(g,x)=gx := \psi(g)(x),~~~\text{for all}~~~g \in G, x\in X.
\end{align*}
First observe that $1 x= \theta(1,x)=\psi(1)(x)=\beta(x)$. We also have
\begin{align*}
&(g_1 g_2) x\\
&=\theta(g_1 g_2,x)\\
&=\psi(g_1 g_2)(x)\\
&=\psi(\alpha(g_1))\circ\beta^{-1}\circ \psi(\alpha(g_2))(x)\\
&=\alpha(g_1) (\beta^{-1}(\alpha(g_2) x))
\end{align*}
Thus, there is an action of $G$ on $X$.
\end{proof}
\begin{definition}
Let $(X, \beta)$ be a Hom-set equipped with a Hom-group action of $(G, \alpha)$.  Suppose $x$ is an element of $X$. We define an orbit of $x$, denoted by $G^\alpha(x)$ as follows:
\begin{align*}
G^\alpha(x)=\lbrace \beta^{-1}(\alpha(g) x)\mid g\in G\rbrace.
\end{align*}
The stabilizer of $x$ is defined by
\begin{align*}
G^\alpha_x=\lbrace g\in G\mid \alpha(g) x=\beta(x)\rbrace.
\end{align*}
\end{definition}

Note that when Hom-group $(G, \alpha=id)$ acts on the Hom-set $(X, \beta=id)$, we get the usual orbit and stabilizer of $x$ corresponding to a group action.

\begin{lemma}
Suppose $(G, \alpha)$ is an involutive Hom-group acting on a non-empty Hom-set $(X, \beta)$. Then the stabilizer of $x$ is a Hom-subgroup of $G$.
\end{lemma}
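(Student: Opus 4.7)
The plan is to verify the four defining properties of a Hom-subgroup for $G^{\alpha}_{x}$: nonemptiness, closure under the binary operation, closure under the twisting map $\alpha$, and closure under inverses. Each verification reduces to a short direct calculation using only the two Hom-group action axioms together with the involutive hypothesis $\alpha^{2}=\alpha$.

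First, I would check that $1\in G^{\alpha}_{x}$. Since $\alpha(1)=1$ and axiom~(1) gives $1\cdot x=\beta(x)$, we have $\alpha(1)\cdot x=\beta(x)$, so the identity lies in $G^{\alpha}_{x}$ and the set is nonempty. Closure under $\alpha$ is equally direct: if $\alpha(g)\cdot x=\beta(x)$, then the involutive condition $\alpha^{2}=\alpha$ gives $\alpha(\alpha(g))\cdot x=\alpha(g)\cdot x=\beta(x)$, so $\alpha(g)\in G^{\alpha}_{x}$.

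For closure under the product, fix $g_{1},g_{2}\in G^{\alpha}_{x}$, so that $\alpha(g_{i})\cdot x=\beta(x)$ for $i=1,2$. Axiom~(2) of the action then yields
\begin{align*}
(g_{1}g_{2})\cdot x
&=\alpha(g_{1})\bigl(\beta^{-1}(\alpha(g_{2})\cdot x)\bigr)\\
&=\alpha(g_{1})\bigl(\beta^{-1}(\beta(x))\bigr)\\
&=\alpha(g_{1})\cdot x\\
&=\beta(x).
\end{align*}
To upgrade this to the stabilizer condition $\alpha(g_{1}g_{2})\cdot x=\beta(x)$, I would combine the multiplicativity of $\alpha$ with the involutive hypothesis $\alpha^{2}=\alpha$, which collapses $\alpha(g_{1}g_{2})\cdot x$ onto $(g_{1}g_{2})\cdot x$, so $g_{1}g_{2}\in G^{\alpha}_{x}$. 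Closure under inverses is obtained by expanding $1\cdot x=(g^{-1}g)\cdot x$ in two ways: axiom~(1) gives $\beta(x)$, while axiom~(2) combined with $\alpha(g)\cdot x=\beta(x)$ yields
\begin{align*}
(g^{-1}g)\cdot x
&=\alpha(g^{-1})\bigl(\beta^{-1}(\alpha(g)\cdot x)\bigr)\\
&=\alpha(g^{-1})\bigl(\beta^{-1}(\beta(x))\bigr)\\
&=\alpha(g^{-1})\cdot x,
\end{align*}
so comparing the two yields $\alpha(g^{-1})\cdot x=\beta(x)$, i.e.\ $g^{-1}\in G^{\alpha}_{x}$.

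The main obstacle is the mismatch between the stabilizer condition $\alpha(g)\cdot x=\beta(x)$ and the way elements appear inside the Hom-action axioms; one must track carefully where the $\alpha$'s sit so that the substitutions inside $\beta^{-1}(\cdot)$ actually simplify. The involutive hypothesis $\alpha^{2}=\alpha$ is exactly what is needed to pass freely between $g\cdot x$ and $\alpha(g)\cdot x$, and thereby to guarantee that the product calculation above closes up inside $G^{\alpha}_{x}$ rather than landing in an $\alpha$-shifted copy of it.
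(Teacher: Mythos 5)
Your proposal is correct and follows essentially the same route as the paper: identity and $\alpha$-closure via $\alpha(1)=1$ and $\alpha^{2}=\alpha$, product closure by expanding with action axiom (2) and collapsing the extra $\alpha$'s using multiplicativity and involutivity, and inverse closure by expanding $1\cdot x=(g^{-1}g)\cdot x$. The only cosmetic difference is that you compute $(g_{1}g_{2})\cdot x=\beta(x)$ first and then identify it with $\alpha(g_{1}g_{2})\cdot x$, whereas the paper computes $\alpha(g_{1}g_{2})\cdot x$ directly; the underlying calculation is identical.
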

\begin{proof}
As $(G, \alpha)$ is an involutive Hom-group, we have $\alpha^2 = \alpha$. First we need to check that $\alpha(G_x^\alpha)\subseteq G_x^\alpha$. Suppose $g\in G_x^\alpha$. Observe that
\begin{align*}
\alpha(\alpha(g)) x=\alpha^2(g) x=\alpha(g) x=\beta(x).
\end{align*}
Thus, $\alpha(g)\in G^\alpha_x$. This implies $\alpha(G_x^\alpha)\subseteq G_x^\alpha$.
Note that $1\in  G^\alpha_x$ as $1 x=\beta(x)$.
Let $g_1, g_2 \in G^\alpha_x$. We have
\begin{align*}
\alpha(g_1 g_2)x & = (\alpha(g_1) \alpha(g_2)) x \\
                                  & = \alpha (\alpha(g_1))(\beta^{-1}\alpha(\alpha(g_2))x) \\
                                  & = \alpha^2(g_1)(\beta^{-1}\alpha^2(g_2)x) \\
                                  & =\alpha(g_1)(\beta^{-1}\alpha(g_2)x) \\
                                  & =\alpha(g_1)((\beta^{-1}\beta) x) \\
                                  & =\alpha(g_1) x \\
                                  & =\beta(x).
                                  \end{align*}
Therefore, $g_1  g_2 \in  G^\alpha_x$. Suppose $g\in  G^\alpha_x$, we have
\begin{align*}
\beta(x)=1 x=(g^{-1} g) x=\alpha(g^{-1}) (\beta^{-1}(\alpha(g) x))=\alpha(g^{-1}) x.
\end{align*}
Thus, $g^{-1}\in G_x^\alpha$. Therefore, $G^\alpha_x$ is a Hom-subgroup of $G$.
\end{proof}
\begin{theorem}{(Orbit-Stabilizer theorem)}
Let $(G,\alpha)$ be a finite involutive Hom-group. Suppose the Hom-group $G$ is acting on non-empty Hom-set $(X, \beta)$. Then for all $x\in X$,
\begin{align*}
\mid G\mid=\mid G^\alpha_x\mid~ \mid G^\alpha(x)\mid.
\end{align*}
\end{theorem}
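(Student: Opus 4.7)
The plan is to construct an explicit bijection between the set of left Hom-cosets $G/G^\alpha_x$ and the orbit $G^\alpha(x)$, and then invoke Lagrange's theorem for Hom-groups established by Hassanzadeh \cite{Mohammad Hassanzadeh}. The preceding lemma guarantees that $G^\alpha_x$ is a Hom-subgroup of $G$, so $G/G^\alpha_x$ makes sense; since left Hom-cosets of a Hom-subgroup have the common cardinality $|G^\alpha_x|$ (by the law of left cancellation) and partition $G$ (Proposition \ref{th-abcd}), Lagrange yields $|G| = [G : G^\alpha_x] \cdot |G^\alpha_x|$. It thus suffices to show $[G : G^\alpha_x] = |G^\alpha(x)|$.

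I would define $\phi : G/G^\alpha_x \to G^\alpha(x)$ by $\phi(gG^\alpha_x) = \beta^{-1}(\alpha(g) x)$. Surjectivity is immediate from the definition of $G^\alpha(x)$. For well-definedness, assume $g_1 G^\alpha_x = g_2 G^\alpha_x$; by Proposition \ref{th-abcd} this is equivalent to $g_1^{-1} g_2 \in G^\alpha_x$, i.e.\ $\alpha(g_1^{-1} g_2) x = \beta(x)$. Expanding the left-hand side via multiplicativity of $\alpha$, the Hom-action axiom, and involutivity $\alpha^2 = \alpha$, one obtains $\alpha(g_1^{-1})\bigl(\beta^{-1}(\alpha(g_2) x)\bigr) = \beta(x)$. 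Setting $y := \beta^{-1}(\alpha(g_2) x)$ we have $\alpha(g_1^{-1}) y = \beta(x)$ and $\beta(y) = \alpha(g_2) x$; applying the Hom-action axiom to $(g_1 g_1^{-1}) y$ gives $\beta(y) = \alpha(g_1)\bigl(\beta^{-1}(\alpha(g_1^{-1}) y)\bigr) = \alpha(g_1)(\beta^{-1}(\beta(x))) = \alpha(g_1) x$. Hence $\alpha(g_1) x = \alpha(g_2) x$, so $\phi$ is well-defined.

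Injectivity is the symmetric reverse: from $\alpha(g_1) x = \alpha(g_2) x$, the same expansion of $\alpha(g_1^{-1} g_2) x$ via multiplicativity, the Hom-action axiom, and $\alpha^2 = \alpha$ produces $\alpha(g_1^{-1})\bigl(\beta^{-1}(\alpha(g_1) x)\bigr)$, which by the Hom-action axiom applied to $g_1^{-1} g_1$ collapses to $(g_1^{-1} g_1) x = 1 \cdot x = \beta(x)$. Therefore $g_1^{-1} g_2 \in G^\alpha_x$, giving $g_1 G^\alpha_x = g_2 G^\alpha_x$ by Proposition \ref{th-abcd}. Combining the bijection $\phi$ with Lagrange then delivers $|G| = |G^\alpha_x| \cdot |G^\alpha(x)|$.

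The main technical obstacle lies in the pair of expansion computations underlying well-definedness and injectivity, and this is precisely where the involutive hypothesis $\alpha^2 = \alpha$ enters essentially: multiplicativity together with the Hom-action axiom produces residual $\alpha^2$ factors on elements of $G$ that must collapse to $\alpha$ for the chain of identities to close cleanly into an application of the axiom on $g g^{-1}$. The remaining pieces --- surjectivity, coset partitioning, equal coset cardinalities, and the invocation of Lagrange --- are routine bookkeeping.
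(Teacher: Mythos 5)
Your proposal is correct and follows essentially the same route as the paper: the same map $\phi(gG^\alpha_x)=\beta^{-1}(\alpha(g)x)$ shown to be a bijection between cosets of the stabilizer and the orbit, with the count then coming from the coset decomposition. You are in fact slightly more complete than the paper, which omits the explicit well-definedness check and leaves the Lagrange-type counting step implicit.
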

\begin{proof}
We define a map
\begin{align*}
\phi: ~&\frac{G}{G^\alpha_x}\to G^\alpha(x), ~~~\phi(gG_x^\alpha) :=\beta^{-1}(\alpha(g)x)~~~.
\end{align*}
We show that $\phi$ is a bijection.

Suppose $\phi(g_1G_x^\alpha)=\phi(g_2G_x^\alpha)$. We have
\begin{align*}
&\beta^{-1}(\alpha(g_1)x)=\beta^{-1}(\alpha(g_2)x),\\
&\alpha(g^{-1}_1)(\beta^{-1}(\alpha(g_1)x))=\alpha(g^{-1}_1)(\beta^{-1}(\alpha(g_2)x)),\\
&(g^{-1}_1 g_1)x=(g^{-1}_1 g_2)x,\\
&ex=(g^{-1}_1 g_2)x,\\
&\beta(x)=\alpha(\alpha^{-1}(g^{-1}_1 g_2))x.
\end{align*}
This implies $\alpha^{-1}(g^{-1}_1 g_2)\in G_x^\alpha$. As $G_x^\alpha$ is a Hom-subgroup, $g^{-1}_1 g_2\in G_x^\alpha$. This implies
$g_1G_x^\alpha = g_2G_x^\alpha.$
Thus, $\phi$ is one-one. Clearly, $\phi$ is onto as $\phi(gG_x^\alpha)=\beta^{-1}(\alpha(g)x)$. Therefore, $\phi$ is a bijection.
\end{proof}

Suppose  Hom-group $(G,\alpha)$ is acting on a Hom-set $(X,\beta)$. We define a relation $\sim$ on $X$ as $x\sim y$ if $x=\beta^{-1}(\alpha(g)y)$ for some $g\in G$.

Note that relation $\sim$ is reflexive as $x=\beta^{-1}(\alpha(1)x)$. Suppose $x\sim y$, this implies, $x=\beta^{-1}(\alpha(g) y)$ for some $g\in G$. Observe that
\begin{align*}
&\alpha(g^{-1}) x=\alpha(g^{-1}) (\beta^{-1}(\alpha(g) y)),\\
&\alpha(g^{-1}) x=(g^{-1} g) y=1 y=\beta(y),\\
&y=\beta^{-1}(\alpha(g^{-1}) x).
\end{align*}
This implies $y\sim x$. Thus, the relation $\sim$ is symmetric.

Suppose $x\sim y$ and $y\sim z$. We show $x\sim z$. For some $g_1,g_2\in G$, we have
\begin{align*}
x=\beta^{-1}(\alpha(g_1) y),~~~ y=\beta^{-1}(\alpha(g_2) z).
\end{align*}
Observe that,
\begin{align*}
x=\beta^{-1}(\alpha(g_1) (\beta^{-1}(\alpha(g_2) z))=\beta^{-1}((g_1 g_2) z)=\beta^{-1}(\alpha(\alpha^{-1}(g_1g_2)) z).
\end{align*}
This implies the relation is transitive.
Thus, the relation $\sim$ is an equivalence relation. Hence, the relation $\sim$ partition $X$ into its equivalence classes. Note that an equivalence class is nothing but an orbit $G^\alpha(x)$ for some $x\in X$. Being an equivalence class, orbits partitions Hom-set $X$ into pairwise disjoint subsets.

\section{First Sylow  theorem for Hom-groups}\label{sec6}
In this final section, we prove first Sylow theorem for Hom groups using Hom-group actions.
\begin{lemma}\label{counting}\cite{isaacs}
Let $p$ be a prime number and $k\geq 0$ and $m\geq 1$ be integers. Then
\begin{align*}
{p^km\choose p^k}\equiv m~~~\text{mod p}.
\end{align*}
\end{lemma}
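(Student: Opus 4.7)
The plan is to prove the congruence by comparing coefficients in a polynomial identity modulo $p$, rather than by any direct manipulation of factorials. Everything is cleanest in the ring $\mathbb{F}_p[x]$.

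First I would establish the iterated ``Freshman's dream'': because $p \mid \binom{p}{i}$ for $1 \leq i \leq p-1$, one has $(1+x)^p \equiv 1 + x^p \pmod{p}$. Applying this inductively, assuming $(1+x)^{p^j} \equiv 1+x^{p^j}\pmod p$, gives
\begin{equation*}
(1+x)^{p^{j+1}} = \bigl((1+x)^{p^j}\bigr)^p \equiv \bigl(1+x^{p^j}\bigr)^p \equiv 1 + x^{p^{j+1}} \pmod{p},
\end{equation*}
so $(1+x)^{p^k} \equiv 1 + x^{p^k} \pmod{p}$ for every $k \geq 0$.

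Next, raising to the $m$-th power, I obtain
\begin{equation*}
(1+x)^{p^k m} = \bigl((1+x)^{p^k}\bigr)^m \equiv \bigl(1 + x^{p^k}\bigr)^m \pmod{p}.
\end{equation*}
Then I would compare the coefficients of $x^{p^k}$ on both sides. On the left, the binomial theorem gives $\binom{p^k m}{p^k}$; on the right, the only way to produce $x^{p^k}$ from $(1+x^{p^k})^m$ is to pick one factor of $x^{p^k}$ and $m-1$ factors of $1$, contributing $\binom{m}{1}=m$. This yields $\binom{p^k m}{p^k} \equiv m \pmod{p}$, as desired.

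There is essentially no obstacle here; the only subtlety is making sure the induction step for the Freshman's dream is written cleanly, and the coefficient comparison is a one-line consequence of the binomial theorem. An alternative elementary route, should one wish to avoid polynomial arguments, is to use the identity $\binom{p^k m}{p^k} = m \binom{p^k m - 1}{p^k - 1}$ and then show $\binom{p^k m - 1}{p^k - 1} \equiv 1 \pmod{p}$ by pairing each factor $p^k m - i$ with $i$ (for $1 \leq i \leq p^k - 1$) and cancelling common powers of $p$; but the polynomial approach above is shorter and avoids case analysis on the $p$-adic valuation of $i$.
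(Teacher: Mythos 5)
Your argument is correct, and there is nothing in the paper to compare it against: the paper states Lemma \ref{counting} with a citation to Isaacs and gives no proof of its own, so your write-up supplies a proof where the paper has only a reference. The route you take is the standard Wielandt-style one: the iterated Freshman's dream $(1+x)^{p^k}\equiv 1+x^{p^k}\pmod p$ (valid because the base case uses $p\mid\binom{p}{i}$ for $1\le i\le p-1$ and the inductive step is just the same identity applied to $x^{p^j}$ in $\mathbb{F}_p[x]$), then raising to the $m$-th power and extracting the coefficient of $x^{p^k}$, which is $\binom{p^k m}{p^k}$ on the left and $\binom{m}{1}=m$ on the right since the exponents occurring in $(1+x^{p^k})^m$ are exactly the multiples $jp^k$, $0\le j\le m$. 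Comparing coefficients is legitimate because congruence of polynomials mod $p$ is exactly coefficientwise congruence, and the degenerate case $k=0$ is still covered (the claim is then trivial). Your sketched alternative via $\binom{p^k m}{p^k}=m\binom{p^k m-1}{p^k-1}$ also works but, as you note, requires tracking $p$-adic valuations of the factors $p^k m-i$ versus $i$; the polynomial argument avoids that bookkeeping and is the cleaner choice to insert here if the authors wanted the paper self-contained.
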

\begin{theorem}
\label{first sylow}
Let $(G,\alpha)$ be an involutive finite Hom-group, and let $p$ be a prime divisor of the order of $G$. Then $G$ has a $p$-sylow Hom-subgroup.
\end{theorem}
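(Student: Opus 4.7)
The plan is to adapt the classical group-actions proof of the first Sylow theorem to the Hom-group setting. Write $|G|=p^{k}m$ with $\gcd(p,m)=1$, and let $\Omega$ be the collection of all $p^{k}$-element subsets of $G$. I would equip $\Omega$ with the bijection $\beta(S):=\alpha(S)=\{\alpha(s)\mid s\in S\}$ and define the candidate Hom-action $\theta:G\times\Omega\to\Omega$ by $\theta(g,S):=gS=\{gs\mid s\in S\}$. Because left cancellation gives $|gS|=|S|$, the image lies in $\Omega$.

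The first step will be verifying that $\theta$ is a genuine Hom-action on $(\Omega,\beta)$. Axiom (1) is immediate from $1\cdot s=\alpha(s)$ for every $s\in S$, which gives $1\cdot S=\alpha(S)=\beta(S)$. For axiom (2) I would use that $\alpha^{-1}$ is multiplicative (Proposition 2.5) to rewrite $\beta^{-1}(\alpha(g_{2})\cdot S)=\{g_{2}\alpha^{-1}(s)\mid s\in S\}$, then invoke Hom-associativity in the form $\alpha(g_{1})(g_{2}\alpha^{-1}(s))=(g_{1}g_{2})s$ to conclude $\alpha(g_{1})\cdot(\beta^{-1}(\alpha(g_{2})\cdot S))=(g_{1}g_{2})\cdot S$.

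The counting step is the heart of the argument. By Lemma \ref{counting}, $|\Omega|=\binom{p^{k}m}{p^{k}}\equiv m\not\equiv 0\pmod{p}$. Because the orbits partition $\Omega$ (via the equivalence relation established at the end of Section \ref{sec5}), at least one orbit $G^{\alpha}(S)$ must have cardinality coprime to $p$. Applying the Orbit-Stabilizer theorem to such an $S$ yields $|G|=|G^{\alpha}_{S}|\cdot|G^{\alpha}(S)|$, and since $p\nmid|G^{\alpha}(S)|$ this forces $p^{k}\mid|G^{\alpha}_{S}|$.

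The main obstacle I expect is producing the matching upper bound $|G^{\alpha}_{S}|\le p^{k}$. To obtain it I would fix any $s_{0}\in S$ and consider the map $\phi:G^{\alpha}_{S}\to\alpha(S)$ defined by $\phi(g):=\alpha(g)s_{0}$. The stabilizer condition $\alpha(g)\cdot S=\beta(S)$ unfolds to $\alpha(g)S=\alpha(S)$, so $\phi$ is well-defined; injectivity follows from right cancellation together with the bijectivity of $\alpha$. Hence $|G^{\alpha}_{S}|\le|\alpha(S)|=p^{k}$, and combined with the divisibility above I conclude $|G^{\alpha}_{S}|=p^{k}$. The stabilizer lemma of Section \ref{sec5} then guarantees $G^{\alpha}_{S}$ is a Hom-subgroup of $G$, providing the desired $p$-Sylow Hom-subgroup.
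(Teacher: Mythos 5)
Your proposal is correct and follows essentially the same route as the paper: the action of $G$ on the collection of $p^{k}$-element subsets with $\beta=\alpha$, the counting Lemma \ref{counting} to find an orbit of size coprime to $p$, and the Orbit-Stabilizer theorem to get $p^{k}\mid|G^{\alpha}_{S}|$. The only (minor) difference is the upper bound $|G^{\alpha}_{S}|\le p^{k}$, where you use the explicit injection $g\mapsto\alpha(g)s_{0}$ of the stabilizer into $\alpha(S)$ instead of the paper's containment $Hy\subseteq Y$; your version is, if anything, a slightly more careful justification of the same step, since the stabilizer condition $\alpha(g)S=\alpha(S)$ most directly places the translates $\alpha(g)s_{0}$ in $\alpha(S)$ rather than $gs_{0}$ in $S$.
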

\begin{proof}
Let $\mid G\mid=p^km$, where $k\geq 0$ and $p$ does not divide $m$. Let $S$ be the collection of all subsets of $G$ having number of elements $p^k$.
Define a bijective map on $S$ as follows:
$$\beta: S\to S,~~~\beta(A)=\alpha(A) = \lbrace \alpha(a) \mid a\in A \rbrace.$$
Thus, $(S,\beta)$ is a Hom-set.
Now we first show $G$ acts on $S$ by regular left multiplication.
\begin{align*}
\theta:&~G\times S\to S,\\
&(g, A)\mapsto g A=\lbrace g a\mid a\in A\rbrace.
\end{align*}
We have
\begin{enumerate}
\item $1 A=\lbrace 1 a\mid a\in A\rbrace=\lbrace \alpha(a)\mid a\in A\rbrace=\alpha(A)=\beta(A)$,
\item $(g_1 g_2) A=\lbrace (g_1 g_2) a\mid a\in A\rbrace.$
\end{enumerate}
On the other hand, we have
\begin{align*}
& \alpha(g_2)A=\lbrace \alpha(g_2) a\mid a\in A\rbrace,\\
 &\beta^{-1}(\alpha(g_2) A)=\lbrace \alpha^{-1}(\alpha(g_2) a)\mid a\in A\rbrace=\lbrace g_2\alpha^{-1}(a)\mid a\in A\rbrace,\\
 &\alpha(g_1)\beta^{-1}(\alpha(g_2) A)=\lbrace \alpha(g_1) (g_2\alpha^{-1}(a))\mid a\in A\rbrace=\lbrace (g_1 g_2) a\mid a\in A\rbrace.
\end{align*}
Therefore, $G$ acts on $S$ by regular left multiplication. This action partition $S$ into disjoint orbits and $|S|$ is the sum of the orbit sizes. Using Lemma \ref{counting},
\begin{align*}
|S|={p^km\choose p^k}\equiv m\neq 0 ~~~\text{(mod p)}.
\end{align*}
This means $|S|$ is not divisible by $p$. Hence, there is some orbit $G^\alpha(X)$ such that $|G^\alpha(X)|$ is not divisible by $p$.

Suppose $Y\in G^\alpha(X)$ and $H=G_Y^\alpha$ be the stabilizer of $Y$ in $G$. From the orbit-stabilizer theorem,
\begin{align*}
|G^\alpha(X)|=\frac{|G|}{|G_Y^\alpha|}.
\end{align*}
Since $p^k~\mid |G|$ and $p~\nmid |G^\alpha(X)| $, this implies, $p^k~\vert~ |G_Y^\alpha|$. Thus, $p^k$ divides $|H|$. This implies $p^k\leq |H|$. Since $H$ is a stabilizer of $Y$ under left multiplication, if $y\in Y$, then $Hy\subseteq Y$. Now,
\begin{align*}
|H|=|Hy|\leq |Y|=p^k.
\end{align*}
Therefore, $|H|=p^k$. As stabilizer $H$ is a Hom-subgroup of $G$  of highest power of $p$, $H$ is a $p$-Sylow Hom-subgroup of $G$.
\end{proof}


\end{document}